\newcommand{\mmp}{\mathbb{P}}
\newcommand{\me}{\mathbb{E}}
\newcommand{\mr}{\mathbb{R}}
\newcommand{\mn}{\mathbb{N}}
\newcommand{\lin}{\lim_{n\to\infty}}
\newcommand{\lix}{\underset{x\to\infty}{\lim}}
\newcommand{\iii}{{\rm i}}
\DeclareMathOperator{\1}{\mathbbm{1}}
\shorttitle{Perpetuities with gamma-like tails} 
\begin{document}

\title{On perpetuities with gamma-like tails} 

\authorone[University of Wroc{\l}aw]{Dariusz Buraczewski} 

\addressone{Mathematical Institute, University of
Wroc{\l}aw, 50-384 Wroc{\l}aw, Poland; e-mail: dbura@math.uni.wroc.pl} 

\authortwo[University of Wroc{\l}aw]{Piotr Dyszewski}

\addresstwo{Mathematical Institute, University of
Wroc{\l}aw, 50-384 Wroc{\l}aw, Poland; e-mail:
pdysz@math.uni.wroc.pl}

\authorthree[Taras Shevchenko National University of Kyiv]{Alexander Iksanov}

\addressthree{Faculty of Computer Science and
Cybernetics, Taras Shevchenko National University of Kyiv, 01601
Kyiv, Ukraine; e-mail: iksan@univ.kiev.ua}

\authorfour[Taras Shevchenko National University of Kyiv]{Alexander Marynych}

\addressfour{Faculty of Computer Science and
Cybernetics, Taras Shevchenko National University of Kyiv, 01601
Kyiv, Ukraine; e-mail: marynych@unicyb.kiev.ua}

\begin{abstract}
An infinite convergent sum of independent and identically
distributed random variables discounted by a multiplicative random
walk is called perpetuity, because of a possible actuarial
application. We give three disjoint groups of sufficient
conditions which ensure that the right tail of a perpetuity
$\mmp\{X>x\}$ is asymptotic to $ax^ce^{-bx}$ as $x\to\infty$ for
some $a,b>0$ and $c\in\mr$. Our results complement those of
Denisov and Zwart [J. Appl. Probab. \textbf{44} (2007),
1031--1046]. As an auxiliary tool we provide criteria for the
finiteness of the one-sided exponential moments of perpetuities.
Several examples are given in which the distributions of
perpetuities are explicitly identified.
\end{abstract}

\keywords{distribution tail; exponential moment; perpetuity; selfdecomposable distribution} 

\ams{60H25}{60G50; 60E07} 

\section{Introduction} 

Let $(A_n, B_n)_{n\in\mn}$ be a sequence of independent and
identically distributed $\mr^2$-valued random vectors with generic
copy $(A, B)$. Put $\Pi_0:=1$ and $\Pi_n:=A_1\cdot \ldots \cdot
A_n$ for $n\in\mn$. The random discounted sum $$X:=\sum_{k\geq
1}\Pi_{k-1}B_k,$$ provided that $|X|<\infty$ a.s., is called {\it
perpetuity} and is of interest in various fields of applied
probability. The term `perpetuity' stems from the fact that such
random series occur in the realm of insurance and finance as sums
of discounted payment streams. Detailed information about various
aspects of perpetuities, including applications, can be found in
the recent monographs \cite{Buraczewski+Damek+Mikosch:2016,
Iksanov:2017}.

There are a number of papers investigating the asymptotics of
$-\log \mmp\{|X|>x\}$ as $x\to\infty$ in the situations when
$\mmp\{|X|>x\}$ exhibits exponential or superexponential decrease,
see \cite{Alsmeyer+Dyszewski:2016+, Alsmeyer+Iksanov+Roesler:2009,
Goldie+Grubel:1996, Hitczenko:2010, Hitczenko+Wesolowski:2009,
Kolodziejek:2016+}. In the present paper we are interested in
precise (non-logarithmic) asymptotics of $\mmp\{X>x\}$ as
$x\to\infty$. Specifically, our main concern is: which conditions
ensure that $\mmp\{X>x\}\sim a x^c e^{-bx}$ as $x\to\infty$ for
some positive $a$, $b$ and real $c$. Distribution tails which
exhibit such asymptotics may be called {\it gamma-like tails},
hence the title of the paper. To our knowledge, works in this
direction are rare. We are only aware of \cite{Denisov+Zwart:2007,
Konstantinides+Ng+Tang:2010, Maulik+Zwart:2006, Rootzen:1986}. The
first three papers are concerned with exponential tails of
perpetuities which correspond to nonnegative and independent $A$
and $B$. The results obtained in \cite{Rootzen:1986} cover the
situation when $A=\gamma\in (0,1)$ a.s., $B$ is not necessarily
nonnegative and satisfies $\mmp\{B>x\}\sim ax^c e^{-x^p}$ as
$x\to\infty$ for some positive $a$, $p$ and real $c$. Under
additional technical assumptions in the case $p>1$ that paper
points out the asymptotics of $\mmp\{X>x\}$ as $x\to\infty$.

We note that the perpetuities with heavy tails have received much
more attention than those with light tails, \cite{Goldie:1991,
Grey:1994, Grincevicius:1975, Kesten:1973} being classical
articles in the area. A non-exhaustive list of very recent
contributions includes \cite{Damek+Dyszewski:2017,
Damek+Kolodziejek:2017, Dyszewski:2016, Kevei:2016, Kevei:2017}.

\section{Main results}

The following result was given as Proposition 4.1 in
\cite{Denisov+Zwart:2007} under the assumptions that $A$ and $B$
are a.s.\ nonnegative and that $\mmp\{A=1\}=0$ which are partially
dispensed with here. For $s\in\mr$, define $\psi(s):=\me e^{sX}$
and $\varphi(s):=\me e^{sB}$, finite or infinite.
\begin{prop}\label{main}
Let $A$ and $B$ be independent and $r>0$. Suppose that
$\mmp\{A=1\}\in [0,1)$ and that either

\noindent (a) $\mmp\{A\in (0,1]\}=1$ or

\noindent (b) $\mmp\{A\in (-1, 0)\}=1$, or

\noindent (c) $\mmp\{|A|\in (0,1]\}=1$ and $\mmp\{A=-1\}\in
(0,1)$.

\noindent (I) Assume that $\mmp\{B=0\}<1$.

\noindent Let the assumption (a) prevail. If $\mmp\{A=1\}=0$, then
$\me \psi(rA)<\infty$ if, and only if, $\me \varphi(rA)<\infty$.
If $\mmp\{A=1\}\in (0,1)$, then $\me \psi(rA)<\infty$ if, and only
if, $\varphi(r)\mmp\{A=1\}<1$.

\noindent Under the assumption (b) $\me \psi(rA)<\infty$ if, and
only if,
\begin{equation}\label{str}
\me e^{rA_1(B_2+A_2B_3)}<\infty.
\end{equation}

\noindent Under the assumption (c) $\me \psi(rA)<\infty$ if, and
only if,
$$\me e^{-rB}\me e^{rB}\big[\mmp\{A=-1\}\big]^2<\big(1-\me
e^{-rB}\mmp\{A=1\}\big)\big(1-\me e^{rB}\mmp\{A=1\}\big).$$

\noindent (II) Suppose that $\mmp\{B>x\}\sim g(x) e^{-bx}$ as
$x\to\infty$ for some $b>0$ and some function $g$ such that
$g(\log x)$ is slowly varying at $\infty$ and that
$${\lim\sup}_{x\to\infty}({\sup}_{1\leq y\leq
x}\,g(y))/g(x)<\infty.$$ Then
\begin{equation}\label{00}
\mmp\{X>x\}\sim \me \psi(bA) \mmp\{B>x\},\quad x\to\infty
\end{equation}
provided that $\me \psi(bA)<\infty$.
\end{prop}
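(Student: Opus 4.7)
My plan rests on the stochastic fixed-point identity
$$X \od B + AX',$$
in which $X'$ is an independent copy of $X$ independent of $(A,B)$. Taking the Laplace transform gives the functional relation $\psi(s)=\varphi(s)\,\me\psi(sA)$, and iterating it yields the product expansion
$$\me\psi(rA)=\prod_{k\geq 0}\me\varphi(rA\Pi_{k}),$$
where $(A_k)_{k\geq 1}$ are i.i.d.\ copies of $A$ independent of the outer $A$, and $\Pi_k=A_1\cdots A_k$ with $\Pi_0=1$. I would read off the three criteria in part~(I) by analyzing when this product converges.

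Under (a) with $\mmp\{A=1\}=0$, the arguments $A\Pi_k\to 0$ a.s., so each factor tends to $1$, and a standard product-convergence criterion reduces the convergence to finiteness of the leading factor $\me\varphi(rA)$. When $\mmp\{A=1\}=p\in(0,1)$, I would split $X$ at the first index $\tau$ with $A_\tau\neq 1$: the prefix is a sum of $\tau$ i.i.d.\ copies of $B$ with $\tau$ geometric of parameter $1-p$, producing a geometric series in $p\varphi(r)$ that converges exactly when $p\varphi(r)<1$. For case~(b) I would iterate the fixed-point identity \emph{twice}, $X \od B+AB'+AA'X''$ with $AA'\in(0,1)$ almost surely, reducing to a variant of case~(a) in which $B$ is replaced by $B+AB'$; this is precisely the criterion \eqref{str}. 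Case~(c) combines the two devices by conditioning on $\{A=-1\}$ versus $\{|A|<1\}$; the stated determinant-like inequality then emerges as the solvability condition of the resulting $2\times 2$ system.

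For part~(II) I would exploit the same identity at the level of tail probabilities. Because $B$ is independent of $(A,X')$,
$$\frac{\mmp\{X>x\}}{\mmp\{B>x\}}=\me\!\left[\frac{\mmp\{B>x-AX'\mid A,X'\}}{\mmp\{B>x\}}\right].$$
Slow variation of $x\mapsto g(\log x)$ at infinity translates into $\mmp\{B>x-y\}/\mmp\{B>x\}\to e^{by}$ for every fixed $y\in\mr$, so the integrand converges pointwise to $e^{bAX'}$, whose expectation equals $\me\psi(bA)$. The second hypothesis on $g$ would be used to produce the uniform bound $\mmp\{B>x-y\}/\mmp\{B>x\}\leq Ce^{by}$ valid for $y\leq x-1$ and $x$ large, so that on $\{AX'\leq x-1\}$ the integrand is dominated by the $\me\psi(bA)$-integrable majorant $Ce^{bAX'}$; dominated convergence on this event identifies the limit as $\me\psi(bA)$.

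The main obstacle I anticipate is controlling the remainder from the event $\{AX'>x-1\}$. Handling it requires an \emph{a priori} estimate of the form $\mmp\{AX'>x\}=o(\mmp\{B>x\})$, which in turn rests on $A$ being, in the sense made precise under (a)--(c), strictly below $1$ in absolute value, so that the tail of $AX'$ is strictly lighter than that of $X$. Proving this uniform smallness is the step I expect to require the bulk of the technical work; once it is established, the dominated-convergence argument above closes the proof of~\eqref{00}.
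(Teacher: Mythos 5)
Your plan for part (I) rests on the product expansion $\me\psi(rA)=\prod_{k\geq 0}\me\varphi(rA\Pi_k)$, and this identity is incorrect. Iterating $\psi(s)=\varphi(s)\,\me\psi(sA)$ (valid here because $A$ and $B$ are independent) gives
\begin{equation*}
\me\psi(rA)=\me\Bigg[\prod_{k\geq 0}\varphi(rA\Pi_k)\Bigg],
\end{equation*}
the expectation of a \emph{random} product whose factors share the common variables $A,A_1,\ldots,A_k$; the expectation cannot be moved inside the product. Even in the cleanest case (a) with $\mmp\{A=1\}=0$, the assertion that a ``standard product-convergence criterion'' reduces matters to finiteness of the leading factor $\me\varphi(rA)$ does not follow from the corrected identity: a random infinite product can diverge even when every factor has finite mean and tends to $1$. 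The paper's route is different and decisive. It observes that $A_1B_2+A_1A_2B_3+\cdots$ is itself a perpetuity $X^\ast$ generated by the i.i.d.\ pairs $(A_k^\ast,B_k^\ast):=(A_k,A_kB_{k+1})$, so that $\me\psi(rA)=\me e^{rX^\ast}$, and then reads off the three criteria from the general finiteness theorems for one-sided exponential moments of perpetuities (Theorems \ref{main_exp} and \ref{main_exp120} and Remark \ref{neg}) applied to $X^\ast$. Your heuristics for the individual cases (stopping at the first $A_\tau\neq 1$, iterating the identity twice, the $2\times 2$ solvability condition) echo steps inside the proofs of those theorems, but without identifying the $X^\ast$ reduction and without the theorems themselves the argument has no foundation.

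For part (II) the paper simply defers to the argument of Denisov and Zwart, so there is no internal proof to compare against; still, the a priori estimate you single out as the missing lemma, $\mmp\{AX'>x\}=o(\mmp\{B>x\})$, is false under the stated hypotheses. Whenever $\mmp\{A=1\}>0$, independence gives $\mmp\{AX'>x\}\geq\mmp\{A=1\}\,\mmp\{X>x\}$, while the very conclusion \eqref{00} asserts $\mmp\{X>x\}\sim\me\psi(bA)\,\mmp\{B>x\}$ with $\me\psi(bA)\in(0,\infty)$; hence $\mmp\{AX'>x\}$ is of the \emph{same} order as $\mmp\{B>x\}$, not smaller. The remainder from $\{AX'>x-1\}$ must instead be controlled through a joint estimate on $\{B+AX'>x,\ AX'\ \text{large}\}$, exploiting the exponential decay of $\mmp\{B>\cdot\}$ together with $\me e^{bAX'}=\me\psi(bA)<\infty$, rather than by a one-sided tail comparison for $AX'$ alone.
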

\begin{rem}
Here is a comment on inequality \eqref{str}. If $B\geq 0$ or
$B\leq 0$ a.s., then \eqref{str} is equivalent to $\me
\varphi(rA_1A_2)<\infty$ and $\me \varphi(rA)<\infty$,
respectively. If $A=-\gamma$ a.s.\ for some $\gamma\in (0,1)$ and
$B$ takes values of both signs with positive probability, then
\eqref{str} is equivalent to $\varphi(-r\gamma)<\infty$ and
$\varphi(r\gamma^2)<\infty$. In the general case, \eqref{str}
which imposes restrictions on both tails of $B$ entails but is not
equivalent to $\me\varphi(rA)<\infty$ and $\me
\varphi(rA_1A_2)<\infty$.
\end{rem}

The argument of \cite{Denisov+Zwart:2007} for part (II) remains
valid in the extended situation treated here. Our contribution
consists in proving part (I), that is, a criterion for $\me
\psi(bA)$ to be finite which is actually a consequence of Theorems
\ref{main_exp}, \ref{main_exp120} and Remark \ref{neg}.

Given next is the more complicated result in which $A$ and $B$ are
allowed to be dependent in a certain way, and the right tail of
possibly two-sided $B$ is gamma-like. Throughout the paper we
shall use the standard notation $x^+:=\max(x,0)$ and
$x^-:=-\min(x,0)$ for $x\in\mathbb{R}$.
\begin{thm}\label{thm:p<1}
Assume that $\mmp\{A\in (0,1]\}=1$;
\begin{equation}\label{cond4}
\mmp\{B>x\}\sim ax^ce^{-bx},\quad x\to\infty
\end{equation}
for some $a,b>0$ and $c<-1$;
\begin{equation}\label{eq:momB1}
\me e^{b B} \1_{\{ A=1\}} < 1;
\end{equation}
\begin{equation}\label{eq:tailB}
\mmp\{Ay+B>x\} \sim f(y) \mmp\{B>x\},\quad x\to\infty
\end{equation}
for each $y\in\mr$ and a nonnegative measurable function $f$;  and
\begin{equation}\label{cond2}
\me \log(1+B^-)<\infty.
\end{equation}
Then $\me f(X)<\infty$ and
\begin{equation}\label{res}
\mmp\{X>x\}~ \sim~ \frac{\me f(X)}{1-\me
e^{bB}\1_{\{A=1\}}}\mmp\{B>x\},\quad x\to\infty.
\end{equation}
\end{thm}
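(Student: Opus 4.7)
My plan is to exploit the stochastic fixed-point equation $X \od B + AX'$, where $X' \od X$ is independent of $(A,B)$; the a.s.\ finiteness of $X$ is guaranteed by \eqref{cond2}. Splitting according to whether $A=1$ or $A<1$ gives
$$\mmp\{X>x\} = \mmp\{B+X'>x,\,A=1\} + \mmp\{B+AX'>x,\,A<1\}.$$
I would analyze the two summands separately: the first via convolution-equivalence theory for the class $\mathcal{S}(b)$ of gamma-like tails (to which $B$ belongs by \eqref{cond4}), and the second via a dominated-convergence argument based on \eqref{eq:tailB} and the strict contraction $A<1$.

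For the first summand, conditioning on $X'$ gives $\mmp\{B+X'>x,\,A=1\}=\int\mmp\{B>x-y,\,A=1\}\mmp\{X'\in dy\}$, which is the tail of the convolution of the sub-probability measure $\nu(\cdot):=\mmp\{B\in\cdot,\,A=1\}$ with the law of $X$. Note that $\int e^{by}\,\nu(dy) = \me e^{bB}\1_{\{A=1\}}=:\rho<1$ by \eqref{eq:momB1}. Under the a priori bound $\mmp\{X>x\}=O(\mmp\{B>x\})$ (to be established separately), the convolution-equivalence formula for the class $\mathcal{S}(b)$ yields
$$\mmp\{B+X'>x,\,A=1\}\sim \nu((x,\infty))\,\me e^{bX} + \rho\,\mmp\{X>x\},$$
whose second term is the self-referential piece that ultimately produces the factor $1/(1-\rho)$ in \eqref{res}. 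For the second summand, conditioning on $X'$ and applying \eqref{eq:tailB} pointwise, a careful dominated-convergence argument (making use of the strict contraction $A<1$ and the exponential moments guaranteed by Proposition \ref{main} under \eqref{eq:momB1}) yields
$$\mmp\{B+AX'>x,\,A<1\}\sim \me[f(X)-f_1(X)]\,\mmp\{B>x\},$$
where $f_1(y):=\lim_{x\to\infty}\mmp\{B+y>x,\,A=1\}/\mmp\{B>x\}$ isolates the $\{A=1\}$ contribution to $f$.

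Combining the two estimates and using the identity $\nu((x,\infty))\,\me e^{bX}\sim \me f_1(X)\,\mmp\{B>x\}$ (which holds because, under the gamma-like tail of $\nu$ inherited from \eqref{cond4} and \eqref{eq:tailB}, $f_1(y)=p_1 e^{by}$ with $p_1:=\lim_{x\to\infty}\nu((x,\infty))/\mmp\{B>x\}\in[0,\infty)$) produces a cancellation that leaves the self-consistent equation
$$C^*\,(1-\rho)=\me f(X),\qquad C^*:=\lim_{x\to\infty}\frac{\mmp\{X>x\}}{\mmp\{B>x\}},$$
which is \eqref{res}; the finiteness $\me f(X)<\infty$ follows a posteriori. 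The main technical obstacle is the preliminary tail bound $\limsup_{x\to\infty}\mmp\{X>x\}/\mmp\{B>x\}<\infty$ needed to invoke convolution equivalence. I would establish it by an iterative argument: substituting the fixed-point equation $n$ times decomposes $\mmp\{X>x\}$ as $\sum_{k=0}^n T_k(x)+R_n(x)$ along the paths $\{A_1=\cdots=A_k=1,\,A_{k+1}<1\}$, with each $T_k(x)$ controlled by the convolution-equivalence upper bound $\mmp\{B_1+\cdots+B_{k+1}>x\}\leq C(k+1)(\me e^{bB})^k\mmp\{B>x\}$ (the $A_{k+1}X^{(k+1)}$ factor absorbed by the contraction $A_{k+1}<1$), while $R_n(x)\leq \mmp\{A=1\}^{n+1}\to 0$; the convergence of $\sum_k(k+1)\rho^k=(1-\rho)^{-2}$ (ensured by $\rho<1$) supplies the desired uniform bound.
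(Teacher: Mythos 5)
Your overall strategy---exploit $X\od B+AX'$, separate the self-referential $\{A=1\}$ contribution from the contractive $\{A<1\}$ one, and solve the resulting linear equation for $c_X:=\lim_{x\to\infty}\mmp\{X>x\}/\mmp\{B>x\}$---is in the right spirit, and the final cancellation correctly produces the $1/(1-\rho)$ factor. But the argument you sketch for the indispensable a priori bound $\limsup_{x\to\infty}\mmp\{X>x\}/\mmp\{B>x\}<\infty$ is circular, and that is precisely the technical crux that the paper's proof is designed to overcome.

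Concretely, on the event $\{A_1=\cdots=A_k=1,\ A_{k+1}<1\}$ you have $X=B_1+\cdots+B_{k+1}+A_{k+1}X^{(k+1)}$ with $X^{(k+1)}\od X$, and the remainder $A_{k+1}X^{(k+1)}$ is \emph{not} ``absorbed'' by $A_{k+1}<1$: since $A_{k+1}$ may take values arbitrarily close to $1$ and $X^{(k+1)}$ has the same (unknown) tail as $X$, you cannot replace $T_k(x)$ by $\mmp\{B_1+\cdots+B_{k+1}>x\}$ up to constants. Controlling $T_k(x)$ therefore already presupposes the very tail bound you are trying to establish. The paper bypasses this circularity by first constructing (Lemma~\ref{lem:stochbound}) an explicit nonnegative $Z$, independent of $(A,B)$, with $AZ+B\leq_{\rm st}Z$ \emph{and} $\mmp\{Z>x\}\sim c_Z\mmp\{B>x\}$; launching the Markov chain \eqref{chain} from $X_0=Z$ then gives a stochastically nonincreasing sequence $(X_n)$ with $X\leq_{\rm st}X_n$, and Lemma~\ref{lem:1iter} (a clean one-step statement proved by a $\{Y\le\delta x\}$/$\{B\le(1-\delta)x\}$ split plus dominated convergence, not by convolution equivalence) propagates the asymptotic constants $c_{X_n}\downarrow c_X$. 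A matching lower bound then comes from an analogous chain started below $X$. Without an analogue of $Z$, your induction has no base.

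Two secondary problems. First, you appeal to Proposition~\ref{main} for the finiteness of exponential moments of $X$, but that proposition assumes $A$ and $B$ independent, which Theorem~\ref{thm:p<1} does not; in the paper the needed moment bound $\me f(X)\le\me e^{bZ}<\infty$ falls out of $f(y)\le e^{by}$ (consequence of \eqref{cond4}) combined with the stochastic majorant $Z$. Second, your convolution-equivalence treatment of $\mmp\{B+X'>x,\ A=1\}$ tacitly requires the sub-probability tail $\nu((x,\infty))=\mmp\{B>x,\ A=1\}$ to be asymptotically a constant multiple of $\mmp\{B>x\}$; that limit $p_1$ is not assumed to exist, and nothing in \eqref{cond4}--\eqref{eq:tailB} forces it. The paper's Lemma~\ref{lem:1iter} avoids this entirely because the indicator $\1_{\{u=1\}}$ emerges from the pointwise limit $\mmp\{uY>x-v\}/\mmp\{B>x\}\to c_Ye^{bv}\1_{\{u=1\}}$ under dominated convergence over the joint law of $(A,B)$, with no assumption on the marginal tail of $\nu$.
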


\begin{rem}
Recall that the distribution of a nonnegative random variable $Y$
belongs to the class $\mathcal{S}(\alpha)$ for $\alpha\geq 0$, if
\begin{itemize}
\item[(a)] $\lix \frac{\mmp\{Y>x-y\}}{\mmp\{Y>x\}}=e^{\alpha y}$ for each $y\in\mr$;
\item[(b)] $\lix \frac{\mmp\{Y+Y^\ast>x\}}{\mmp\{Y>x\}}= 2\me e^{\alpha Y}<\infty$, where $Y^\ast$ is an independent copy of $Y$.
\end{itemize}

Condition \eqref{cond4} with $c<-1$ ensures that the distribution
of  $B^+$ belongs to $\mathcal{S}(b)$. While point (a) above is
easily checked, point (b) follows from Lemma 7.1 (iii) in
\cite{Rootzen:1986}. Theorem \ref{thm:p<1} is closely related to
Proposition 4.2 in \cite{Denisov+Zwart:2007} in which a similar
asymptotic result was proved under the assumptions that $A$ and
$B$ are independent, that $\mmp\{A=1\}=0$ and $\mmp\{B\geq 0\}=1$,
and that the distribution of $B$ belongs to the class
$\mathcal{S}(b)$. Theorem 3.2 in
\cite{Konstantinides+Ng+Tang:2010} is another result in this vein.
A perusal of the proof given below reveals that \eqref{res}
remains valid if \eqref{cond4} is replaced by the assumption that
the distribution of $B^+$ belongs to $\mathcal{S}(b)$. However, we
refrain from formulating Theorem \ref{thm:p<1} in this way, for
our focus here is on the gamma-like tails.
\end{rem}

\begin{rem}\label{refe}
Here, we provide more details on functions $f$ arising in
\eqref{eq:tailB} assuming that the assumptions of Theorem
\ref{thm:p<1} are in force. It is clear that $f(y)=\me e^{byA}$,
$y\in\mr$ whenever $A$ and $B$ are independent. The last equality
is not necessarily true when $A$ and $B$ are dependent. For
instance, if $A=\zeta_1\1_{\{B>q\}}+\zeta_2\1_{\{B\leq q\}}$ for
some $\zeta_1,\zeta_2\in (0,1)$, $\zeta_1\neq \zeta_2$ and some
$q>0$, then $f(y)=e^{by\zeta_1}\neq \me e^{byA}$, $y\in\mr$.

We note that a condition of form \eqref{eq:tailB} appears in
Theorem 3 of \cite{Palmowski+Zwart:2007} in the setting quite
different from ours. The cited result gives sufficient conditions
under which the right tail of $\sup_{k\geq 1}\Pi_{k-1}B_k$ is
heavy. One of the referees has kindly informed us that our method
of proof of Theorem \ref{thm:p<1} is rather similar to that of
Theorem 3 in \cite{Palmowski+Zwart:2007}. More details on this
point will be given at the end of Section \ref{12}.
\end{rem}

Proposition \ref{main} and Theorem \ref{thm:p<1} cover the
situation where a gamma-like tail of $X$ is inherited from a
gamma-like tail of $B$, the influence of the distribution of $A$
being small, for it is only seen in the multiplicative constant.
Example \ref{basic} given below reveals that the distributions of
both $A$ and $B$ may give principal contributions to a gamma-like
tail of $X$.

To proceed we need more notation. Denote by $\gamma(a,b)$ and
$\beta(c,d)$ a gamma distribution with parameters $a,b>0$ and a
beta distribution with parameters $c,d>0$, respectively. Recall
that $$\gamma(a,b)({\rm
d}x)=\frac{b^ax^{a-1}e^{-bx}}{\Gamma(a)}\1_{(0,\infty)}(x){\rm
d}x,$$ where $\Gamma(\cdot)$ is the Euler gamma function, and
$$\beta(c,d)({\rm d}x)=\frac{1}{{\rm B}(c,d)}x^{c-1}(1-x)^{d-1}\1_{(0,1)}(x){\rm d}x,$$ where ${\rm B}(\cdot,\cdot)$ is the Euler beta function. The
following example is well-known, see, for instance, Example 3.8.2
in \cite{Vervaat:1979}.
\begin{ex}\label{basic} \rm
Assume that $A$ and $B$ are independent, $A$ has a $\beta(c,1)$
distribution and $B$ has a $\gamma(1,b)$ (exponential)
distribution. Then $X$ has a $\gamma(c+1,b)$
distribution\footnote{This can be checked in several ways, for
instance, via the argument given in Example \ref{2}}. In
particular,
\begin{equation}\label{asy}
\mmp\{X>x\}\sim \frac{(bx)^c} {\Gamma(c+1)}e^{-bx},\quad
x\to\infty.
\end{equation}
\end{ex}

Our next result, Theorem \ref{main2}, provides an extension of
Example \ref{basic} in that $B$ is allowed to take values of both
signs with positive probability and that the right tail of $B$ is
approximately, rather than precisely, exponential. Our Theorem
\ref{main2} is close in spirit to Theorem 6.1 in
\cite{Maulik+Zwart:2006} because in both results it is assumed
that while one of the independent input random variables $A$ and
$B$ obeys a particular distribution ($A$ has a $\beta(1,\lambda)$
distribution in our Theorem \ref{main2}; $B$ has a
$\gamma(1,\lambda)$ distribution in Theorem 6.1
\cite{Maulik+Zwart:2006}), the distribution of the other random
variable follows a prescribed tail behavior.
\begin{thm}\label{main2}
Assume that $A$ and $B$ are independent; $A$ has a
$\beta(\lambda,1)$ distribution for some $\lambda>0$; condition
\eqref{cond2} holds and
\begin{equation}\label{dec1}
\mmp\{B>x\}=Ce^{-bx}+r(x)
\end{equation}
for some $C,b>0$, all $x\geq 0$, and a function $r$ such that
\begin{equation}\label{cond1}
\lix e^{bx}r(x)=0,
\end{equation}
\begin{equation}\label{r_1_integreable}
\int_1^\infty\frac{e^{by}}{y}r^{+}(y){\rm d
y}<\infty\quad\text{and}\quad\int_1^\infty\frac{e^{(b+\varepsilon)y}}{y}r^{-}(y){\rm
d y}<\infty
\end{equation}
for some $\varepsilon>0$. Then
\begin{equation}\label{main_claim}
\mmp\{X>x\}\sim Kx^{\lambda C}e^{-bx},\quad x\to\infty,
\end{equation}
where
$$K:=\frac{C b^{C\lambda}}{\Gamma(C\lambda+1)}\exp\left(\lambda\left[\int_0^\infty \frac{e^{by}-1}{y}r(y){\rm
d}y-\int_{-\infty}^0\frac{e^{by}-1}{y}\mmp\{B\leq y\}{\rm
d}y\right]\right)<\infty.$$
\end{thm}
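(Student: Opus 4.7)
The strategy is to analyse $\psi(s)=\me e^{sX}$ near its critical abscissa $s=b$ and then invoke a Tauberian inversion. From $X\od B+AX'$ with $X'$ independent of $(A,B)$, conditioning on $A$ via its explicit density $\lambda u^{\lambda-1}\1_{(0,1)}(u)$ yields $\psi(s)=\vf(s)\,\lambda s^{-\lambda}\int_0^s v^{\lambda-1}\psi(v)\,dv$ for $s\in[0,b)$. Differentiating in $s$ and integrating back from $0$ produces the key identity
\begin{equation}\label{eq:mgfid-plan}
\log\psi(s)=\log\vf(s)+\lambda\int_0^s\frac{\vf(u)-1}{u}\,du,\qquad s\in[0,b).
\end{equation}

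To extract the singular behaviour of \eqref{eq:mgfid-plan} as $s\to b^-$, integration by parts rewrites the right-hand integral in terms of the tails of $B$, and inserting \eqref{dec1} and applying Frullani's identity $\int_0^\infty(e^{-(b-s)y}-e^{-by})y^{-1}\,dy=\log(b/(b-s))$ extracts the singular contribution $C\log(b/(b-s))$. The two remaining integrals (involving $r$ and $\mmp\{B\le y\}$) have finite limits as $s\to b^-$, thanks to \eqref{r_1_integreable} (which also proves $K<\infty$) and to $(e^{by}-1)/y\asymp 1/|y|$ as $y\to-\infty$ combined with \eqref{cond2}; their sum is the $J$ appearing in the exponent of $K$. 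Writing $\vf(s)=sC/(b-s)+\vf_r(s)$, I verify $(b-s)\vf_r(s)\to 0$: for the positive-tail piece, \eqref{cond1} gives $e^{by}r^+(y)\to 0$ and a truncation argument yields the claim; for the negative parts one uses the $\varepsilon$-reinforcement in \eqref{r_1_integreable} (which delivers $\int_0^\infty e^{by}r^-(y)\,dy<\infty$) together with the identity $\int_{-\infty}^0 e^{sy}\mmp\{B\le y\}\,dy=(1-\me e^{-sB^-})/s$. Hence $(b-s)\vf(s)\to bC$, and
$$\psi(s)\;\sim\;\frac{bC\cdot b^{\lambda C}\,e^{\lambda J}}{(b-s)^{\lambda C+1}}\;=\;\frac{bK\,\Gamma(\lambda C+1)}{(b-s)^{\lambda C+1}},\qquad s\to b^-.$$

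The final Tauberian step is the main obstacle. Integration by parts converts the above into $\int_0^\infty e^{-ux}e^{bx}\mmp\{X>x\}\,dx\sim K\Gamma(\lambda C+1)u^{-(\lambda C+1)}$ as $u\to 0^+$, and Karamata's Tauberian theorem applied to the positive measure $e^{bx}\mmp\{X>x\}\,dx$ gives $\int_0^x e^{bt}\mmp\{X>t\}\,dt\sim Kx^{\lambda C+1}/(\lambda C+1)$. Upgrading this integrated asymptotic to the pointwise \eqref{main_claim} via the monotone density theorem requires ultimate monotonicity of $x\mapsto e^{bx}\mmp\{X>x\}$, not evident from the hypotheses. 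I plan to bypass this by a convolution factorisation: write $\psi(s)=\bigl(b/(b-s)\bigr)^{\lambda C+1}h(s)$ with $h(s):=(1-s/b)\vf(s)e^{\lambda J(s)}$, note $h(b)=Ce^{\lambda J}$, and --- using the compound Poisson representation $X=B_1+\int_0^\infty e^{-t}dZ_t$ with a careful splitting of the L\'evy measure of $Z$ into a leading $\mathrm{Exp}(b)$-part (contributing the $\gamma(\lambda C+1,b)$-factor once combined with the singular part of $B$) and a signed remainder --- show that $h$ is the moment generating function of a random variable $Y$ independent of a $\gamma(\lambda C+1,b)$-distributed $G$. Breiman's lemma for convolutions with a gamma-like tail then yields $\mmp\{X>x\}\sim\me e^{bY}\cdot\mmp\{G>x\}=Kx^{\lambda C}e^{-bx}$, establishing \eqref{main_claim}.
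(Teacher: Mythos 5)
Your identity $\log\psi(s)=\log\vf(s)+\lambda\int_0^s\frac{\vf(u)-1}{u}\,du$ is exactly the moment-generating-function incarnation of the characteristic-function identity \eqref{chf} that the paper works with, and your analysis of the singularity at $s=b^-$ (Frullani extraction of $(b/(b-s))^{C\lambda}$, $(b-s)\vf(s)\to bC$, finiteness of the two remainder integrals under \eqref{r_1_integreable} and \eqref{cond2}) is correct and parallels the paper's first displayed chain. You are also right to reject the pure Tauberian route: without ultimate monotonicity of $x\mapsto e^{bx}\mmp\{X>x\}$, the monotone-density step fails.

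The genuine gap is in the convolution-factorisation you propose as a bypass. You want $h(s)=(1-s/b)\vf(s)\,e^{\lambda J(s)}$ to be the mgf of a random variable $Y$ so that $X\od Y+G$ with $G\sim\gamma(\lambda C+1,b)$, and then apply Breiman. But $J(s)$ contains the summand $-\lambda\int_0^\infty\frac{e^{sy}-1}{y}\,r^-(y)\,{\rm d}y$, so the corresponding factor in $h$ is
$$\exp\!\left(-\lambda\int_0^\infty\frac{e^{sy}-1}{y}\,r^-(y)\,{\rm d}y\right),$$
the \emph{reciprocal} of the mgf of a nonnegative infinitely divisible law, not an mgf. (Concretely, with $r^-(y)=e^{-(b+\varepsilon)y}$ this factor becomes $(1-s/(b+\varepsilon))^{\lambda}$, which is not a moment generating function.) So the ``signed remainder'' cannot simply be absorbed into a bona fide $Y$, and the proposed $X\od Y+G$ does not hold; Breiman's lemma has nothing to act on.

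The paper resolves exactly this obstruction differently. It rearranges the identity to put the $r^-$ contribution on the left: $X+Z_1\od B+V+Z_2-Z_3$, where $Z_1,Z_2,Z_3$ are honest nonnegative infinitely divisible variables with L\'evy measures built from $r^-$, $r^+$, $\mmp\{-B\geq\cdot\}$ respectively, and $V\sim\gamma(C\lambda,b)$. It then computes the tail of $B+V$ by Lemma 7.1(iii) in \cite{Rootzen:1986}, passes to $B+V+Z_2-Z_3=X+Z_1$ by the Breiman-type Proposition 2.1 of \cite{Denisov+Zwart:2007}, and finally \emph{deconvolves} $Z_1$ away using Corollary 4.3(ii) in \cite{Jacobsen+etal:2009} (a cancellation result for regularly varying tails, applicable here because $Z_1$ is infinitely divisible with $\me e^{(b+\varepsilon)Z_1}<\infty$ by the second half of \eqref{r_1_integreable}). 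This deconvolution step is the ingredient your plan is missing, and it is not elementary; without it, the signed part of the L\'evy measure blocks the direct factorisation you envisage.
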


The remainder of the paper is organized as follows. In Section
\ref{ex} we give several examples intended to illustrate
Proposition \ref{main} and Theorem \ref{main2}. Also in this
section is a discussion of an interesting connection between
perpetuities arising in Theorem \ref{main2} and certain
selfdecomposable distributions. It is exactly this link which
makes the proof of Theorem \ref{main2} relatively simple. In
Section \ref{crit} we provide criteria for the existence of the
one-sided exponential moments of perpetuities, the results which
are needed for the proof of Proposition \ref{main}. The picture is
incomplete yet, for a criterion remains a challenge in the case
where both $A$ and $B$ take values of both signs with positive
probability. All the proofs are given in Sections \ref{11},
\ref{12} and \ref{13}.

\section{Illustrating examples}\label{ex}

Here is an example illustrating Proposition \ref{main}.
\begin{ex} \rm
Denote by $\theta_a$ and $\theta_b$ independent random variables
with a $\gamma(1,a)$ and $\gamma(1,b)$ distribution, respectively.
Let $A=\gamma\in (0,1)$ a.s.\ and $\me e^{sB}=\frac{a-\gamma
s}{a-s}\frac{b+\gamma s}{b+s}$ for $-b<s<a$. Then
$X=\theta_a-\theta_b$ or equivalently $\me
e^{sX}=\frac{a}{a-s}\frac{b}{b+s}$. A standard calculation shows
that $\mmp\{X>x\}=\frac{b}{a+b}e^{-ax}$ for $x>0$. The
distribution of $B$ is a mixture of the atom at zero with weight
$\gamma^2$, the distribution of $\theta_a$ with weight
$\gamma(1-\gamma)$, the distribution of $-\theta_b$ with weight
$\gamma(1-\gamma)$ and the distribution of $\theta_a-\theta_b$
with weight $(1-\gamma)^2$. Hence,
$\mmp\{B>x\}=(1-\gamma)\frac{b+a\gamma}{a+b}e^{-ax}$ for $x>0$ in
full agreement with Proposition \ref{main}.
\end{ex}

It is well known that the explicit distributions of the
perpetuities are rarely available. Below we give several examples
of distributions of $B$ satisfying the assumptions of Theorem
\ref{main2} for which distributions of the corresponding
perpetuities $X$ can be identified. Among others, this allows us
to check validity of formula \eqref{main_claim}. We start with a
trivial observation that the distributions of $A$ and $B$ as given
in Example \ref{basic} satisfy the assumptions of Theorem
\ref{main2} with $\lambda=c$, $C=1$ and $r(x)\equiv 0$ in which
case \eqref{main_claim} amounts to \eqref{asy} as it must be.

Throughout the rest of the section we assume, without further
notice, that $B$ is independent of $A$ and that $A$ has a
$\beta(1,\lambda)$ distribution. We first point out an interesting
connection with special selfdecomposable distributions which
enables us to obtain a useful representation
\begin{equation}\label{chf}
\Psi(t):=\me e^{{\rm i}tX}=\Phi(t)\exp\left(\lambda\int_0^t
\frac{\Phi(u)-1}{u}{\rm d}u\right),\quad t\in\mr,
\end{equation}
where $\Phi(t):=\me e^{{\rm i}tB}$, $t\in\mr$. The connection is
implicit in \cite{Takacz:1955, Vervaat:1979} and perhaps some
other works.

The class $L$ of selfdecomposable distributions is comprised of
all possible limit distributions for the sums, properly normalized
and centered, of independent (not necessarily identically
distributed) random variables satisfying an infinitesimality
condition. It was proved in \cite{Jurek+Vervaat:1983} that the
class $L$ coincides with the class of distributions of the random
variables $J:=\int_{(0,\infty)}e^{-s}{\rm d}Y(s)$, where
$(Y(t))_{t\geq 0}$ is a L\'{e}vy process with $\me \log
(1+|Y(1)|)<\infty$. It is known (see, for instance, formula (4.4)
in \cite{Jurek+Vervaat:1983}) that $$\log\me e^{{\rm
i}tJ}=\int_0^t\frac{\log\me e^{{\rm i}sY(1)}}{s}{\rm d}s,\quad
t\in\mr.$$ If $(Y(t))_{t\geq 0}$ is a compound Poisson process of
intensity $\lambda$ with jumps $B_k$ satisfying the assumptions of
Theorem \ref{main2}, then
\begin{equation}\label{chf2}
\log\me e^{{\rm i}tJ}=\lambda \int_0^t\frac{\Phi(s)-1}{s}{\rm
d}s,\quad t\in\mr
\end{equation}
as a consequence of $\log\me e^{{\rm i}tY(1)}=\lambda (\Phi(t)-1)$
for $t\in\mr$. Recalling that the function $x\mapsto \log(1+x)$ is
subadditive on $[0,\infty)$ we conclude that conditions
\eqref{cond2} and \eqref{dec1} ensure that $\me \log(1+|B|)\leq
\me \log(1+B^+)+\me \log (1+B^-)<\infty$, whence $\me \log
(1+|Y(1)|)\leq \me N\me \log (1+|B|)<\infty$, where $N$ is a
Poisson distributed random variable with parameter $\lambda$. The
latter inequality secures the convergence of the integral in
\eqref{chf2}. The selfdecomposable distributions with the
characteristic functions of form \eqref{chf2} were investigated in
\cite{Iksanov:2002, Iksanov+Jurek:2003}. Formula \eqref{chf} is a
consequence of \eqref{chf2} and a representation
$X=B_1+A_1(B_2+A_2B_3+\ldots)$ a.s.\ and the fact that
$A_1(B_2+A_2B_3+\ldots)$ is independent of $B_1$ and has the same
distribution as $J$ in \eqref{chf2}.

\begin{ex}\label{2} \rm
Let $B=\xi/b-\eta/a$ for $a,b>0$ and independent random variables
$\xi$ and $\eta$ with a $\gamma(1,1)$ distribution (exponential
distribution of unit mean). Then
$\mmp\{B>x\}=\frac{a}{a+b}e^{-bx}$ for $x>0$ and
\begin{equation}\label{cond3}
\mmp\{B\leq x\}=\frac{b}{a+b}e^{ax}~\text{for}~ x<0,
\end{equation}
so that the assumptions of Theorem \ref{main2} are satisfied with
$C=a/(a+b)$ and $r(x)\equiv 0$. Since $$\Phi(t)=\me e^{{\rm
i}tB}=\frac{b}{b-{\rm i}t}\frac{a}{a+{\rm i}t},\quad t\in\mr,$$ we
infer with the help of \eqref{chf}
$$\me e^{{\rm i}tX}=\bigg(\frac{b}{b-{\rm i}t}\bigg)^{\frac{a\lambda}{a+b}+1}\bigg(\frac{a}{a+{\rm i}t}\bigg)^{\frac{b\lambda}{a+b}+1},\quad t\in\mr.$$ Thus, $X$ has the same distribution as $Y-Z$, where $Y$ and $Z$
are independent random variables with $\gamma(a\lambda/(a+b)+1,b)$
and $\gamma(b\lambda/(a+b)+1,a)$ distributions, respectively.
Noting that the function $x\mapsto\mmp\{e^Y>x\}$ is regularly
varying at $\infty$ of index $-b$ and applying Breiman's lemma
(Proposition 3 in \cite{Breiman:1965} and Corollary 3.6 (iii) in
\cite{Cline+Samorodnitsky:1994}) we conclude that
$$\mmp\{X>x\}=\mmp\{e^Ye^{-Z}>e^x\}\sim \me
e^{-bZ}\mmp\{Y>x\}=\bigg(\frac{a}{a+b}\bigg)^{\frac{b\lambda}{a+b}+1}\mmp\{Y>x\},\quad
x\to\infty.$$ In view of \eqref{asy} this entails
\begin{equation}\label{3}
\mmp\{X>x\}\sim
\bigg(\frac{a}{a+b}\bigg)^{\frac{b\lambda}{a+b}+1}\frac{(bx)^{\frac{a\lambda}{a+b}}}{\Gamma(a\lambda/(a+b)+1)}e^{-bx},\quad
x\to\infty.
\end{equation}
To check that formula \eqref{main_claim} gives the same answer we
have to calculate $K$ appearing in that formula. Using
\eqref{cond3} we obtain
\begin{eqnarray}
&&\exp\bigg(-\lambda\int_{-\infty}^0\frac{e^{by}-1}{y}\mmp\{B\leq
y\}{\rm
d}y\bigg)\notag\\&=&\exp\bigg(-\frac{b\lambda}{a+b}\int_0^\infty\frac{e^{-ay}-e^{-(a+b)y}}{y}{\rm
d}y\bigg)\notag\\&=&\exp\bigg(-\frac{b\lambda}{a+b}\log\bigg(\frac{a+b}{a}\bigg)\bigg)=\bigg(\frac{a}{a+b}\bigg)^{\frac{b\lambda}{a+b}}\label{fr}
\end{eqnarray}
having observed that the last integral is a Frullani integral.
Thus,
$$K=\frac{\frac{a}{a+b}b^{\frac{a\lambda}{a+b}}}{\Gamma(a\lambda/(a+b)+1)}\bigg(\frac{a}{a+b}\bigg)^{\frac{b\lambda}{a+b}}
=\bigg(\frac{a}{a+b}\bigg)^{\frac{b\lambda}{a+b}+1}\frac{b^{\frac{a\lambda}{a+b}}}{\Gamma(a\lambda/(a+b)+1)}$$
which is in line with \eqref{3}.
\end{ex}

\begin{ex}\rm
Put $B:=\xi-\eta$ for independent positive random variables $\xi$
and $\eta$. Assume that
\begin{equation}\label{inter}
\mmp\{\xi>x\}=C_1e^{-bx}+r_1(x),\quad x\geq 0
\end{equation}
and that $r_1$ satisfies \eqref{cond1} and
\eqref{r_1_integreable}. Then
\begin{align*}
\mmp\{B > x\}&=\int_0^{\infty}\mmp\{\xi>x+y\}\mmp\{\eta\in{\rm d}y\}\\
&=C_1(\me e^{-b\eta})e^{-bx}+\me r_1(x+\eta)=:C e^{-bx}+r(x).
\end{align*}
By the Lebesgue dominated convergence theorem $\lix e^{bx}r(x)=0$.
Furthermore, by Fubini's theorem and the fact that $y\mapsto
y^{-1}e^{by}$ is nondecreasing on $[1/b,\infty)$ we obtain
\begin{multline*}
\int_{1/b}^\infty\frac{e^{by}}{y}r^{+}(y){\rm d}y\leq \me
\int_{1/b}^\infty \frac{e^{by}}{y}r_1^{+}(y+\eta){\rm d}y \leq\\
\me \int_{1/b}^\infty
\frac{e^{b(y+\eta)}}{y+\eta}r_1^{+}(y+\eta){\rm d}y =\me
\int_{1/b+\eta}^\infty \frac{e^{by}}{y}r_1^{+}(y){\rm d}y \leq
\int_{1/b}^\infty \frac{e^{by}}{y}r_1^{+}(y){\rm d}y<\infty.
\end{multline*}
Analogously,
$$
\int_1^\infty \frac{e^{(b+\varepsilon)y}}{y}r^{-}(y){\rm
d}y<\infty.
$$
Hence, under \eqref{inter} the right tail of the distribution of
$B$ satisfies the assumptions of Theorem \ref{main2} with
$C:=C_1\me e^{-b\eta}$ and $r(x):=\me r_1(x+\eta)$ whatever the
distribution of $\eta$.

To give a concrete example let $\xi$ and $\eta$ be independent
with $\mmp\{\xi>x\}=\mmp\{\eta>x\}=pe^{-bx}+(1-p)e^{-cx}$ for
$x\geq 0$, $c>b>0$ and $p\in (0,1)$. Condition \eqref{inter} holds
with $C_1=p$ and $r_1(x)=(1-p)e^{-cx}$ which trivially satisfies
\eqref{cond1} and \eqref{r_1_integreable}. Further,
\begin{equation}\label{aux}
\mmp\{B>x\}=\mmp\{B\leq -x\}=\frac{c_1e^{-bx}+c_2e^{-cx}}{2},\quad
x\geq 0,
\end{equation}
where
$$c_1:=p^2+\frac{2p(1-p)c}{b+c},~c_2:=(1-p)^2+\frac{2p(1-p)b}{b+c},$$
which immediately implies that condition \eqref{cond2} holds and
that $B=\xi-\eta$ has the characteristic function
\begin{equation*}
\Phi(t)=\me e^{\iii t
B}=c_1\frac{b^2}{b^2+t^2}+c_2\frac{c^2}{c^2+t^2},\quad t\in\mr.
\end{equation*}
Observing that $$\exp\bigg(\alpha\int_0^\infty (e^{{\rm
i}ut}-1)\frac{e^{-\beta u}}{u}{\rm
d}u\bigg)=\bigg(\frac{\beta}{\beta-{\rm i}t}\bigg)^\alpha,\quad
t\in\mr$$ for $\alpha,\beta>0$ we obtain, with the help of
\eqref{aux},
\begin{eqnarray*}
&&\exp\bigg(\lambda \int_0^t\frac{\Phi(u)-1}{u}{\rm
d}u\bigg)\\&=&\exp\bigg(\lambda\int_0^\infty (e^{{\rm
i}ut}-1)\frac{\mmp\{B>u\}}{u}{\rm d}u\bigg)\exp\bigg(\lambda
\int_0^\infty (e^{-{\rm i}ut}-1)\frac{\mmp\{-B>u\}}{u}{\rm
d}u\bigg)\\&=&\left(\frac{b^2}{b^2+t^2}\right)^{c_1\lambda/2}\left(\frac{c^2}{c^2+t^2}\right)^{c_2\lambda/2}.
\end{eqnarray*}
This entails $$ \me e^{\iii t
X}=\Phi(t)\left(\frac{b^2}{b^2+t^2}\right)^{c_1\lambda/2}\left(\frac{c^2}{c^2+t^2}\right)^{c_2\lambda/2}$$
from which we conclude that $X$ has the same distribution as
$\xi-\eta+Y_1-Y_2+Z_1-Z_2$, where the latter random variables are
independent, $Y_1$ and $Y_2$ have a $\gamma(c_1\lambda/2,b)$
distribution, and $Z_1$ and $Z_2$ have a $\gamma(c_2\lambda/2,c)$
distribution. Note that $$\me
e^{-b\eta}=\frac{p}{2}+\frac{(1-p)c}{b+c}=\frac{c_1}{2p},~ \me
e^{-bY_2}=\bigg(\frac{1}{2}\bigg)^{c_1\lambda/2},~ \me
e^{b(Z_1-Z_2)}=\bigg(\frac{c^2}{c^2-b^2}\bigg)^{c_2\lambda/2}$$
and that the exponential moments of order $b+\varepsilon$ for
$\varepsilon\in (0,c-b)$ are finite. Invoking Breiman's lemma
yields
\begin{eqnarray*}
\mmp\{X>x\}&\sim& \me
e^{b(-\eta-Y_2+Z_1-Z_2)}\mmp\{\xi+Y_1>x\}\\&=&\frac{c_1}{2p}\left(\frac{1}{2}\right)^{c_1\lambda/2}\left(\frac{c^2}{c^2-b^2}\right)^{c_2\lambda/2}\mmp\{\xi+Y_1>x\}
\end{eqnarray*}
 as $x\to\infty$. In view of the equality
$\gamma(c_1\lambda/2,b)\ast\gamma(1,b)=\gamma(c_1\lambda/2+1,b)$
and the asymptotic relation
$$\gamma(c_1\lambda/2,b)\ast\gamma(1,c)((x,\infty))=o(\gamma(c_1\lambda/2+1,b)((x,\infty))),\quad x\to\infty$$
we have
$$\mmp\{\xi+Y_1>x\} \sim p\,\gamma(c_1(\lambda/2)+1,b)((x,\infty)),\quad x\to\infty.$$
Combining pieces together and applying formula \eqref{asy} we
obtain
\begin{equation}
\mmp\{X>x\}\sim\frac{c_1}{2}\left(\frac{1}{2}\right)^{c_1\lambda/2}\left(\frac{c^2}{c^2-b^2}\right)^{c_2\lambda/2}\frac{b^{\lambda
c_1/2}}{\Gamma(c_1(\lambda /2)+1)}x^{c_1\lambda/2}e^{-bx},\quad
x\to\infty.\label{asy2}
\end{equation}

Let us show that asymptotics \eqref{asy2} follows from Theorem
\ref{main2} with $C=c_1/2$ and $r(x)=(c_2/2)e^{-cx}$. To this end,
we only have to calculate $K$ appearing in \eqref{main_claim}.
Using a formula for Frullani's integrals (see \eqref{fr}) we
obtain
\begin{eqnarray*}
K&=&\frac{c_1}{2}\frac{b^{c_1\lambda/2}}{\Gamma(c_1(\lambda/2)+1)}\exp\bigg[\lambda\bigg(\int_0^\infty\frac{e^{by}-1}{y}\frac{c_1}{2}e^{-by}{\rm
d}y\\&-&\int_0^\infty
\frac{1-e^{-by}}{y}\bigg(\frac{c_1}{2}e^{-by}+\frac{c_2}{2}e^{-cy}\bigg){\rm
d}y\bigg)\bigg]\\&=&
\frac{c_1}{2}\left(\frac{1}{2}\right)^{c_1\lambda/2}\left(\frac{c^2}{c^2-b^2}\right)^{c_2\lambda/2}\frac{b^{\lambda
c_1/2}}{\Gamma(c_1(\lambda/2)+1)}
\end{eqnarray*}
which is in agreement with \eqref{asy2}.
\end{ex}

\begin{ex}\rm
Let $B$ be a positive random variable with the distribution tail
$$ \mmp\{B>x\}=\frac{1}{\lambda}\frac{e^{-bx}(1-e^{-\lambda
x})}{1-e^{-x}},\quad x>0,
$$
where $b,\lambda>0$ and $2b+\lambda>1$. The last assumption
warrants that the right-hand side is a decreasing function.
Writing $$
\mmp\{B>x\}=\frac{1}{\lambda}e^{-bx}+\frac{1}{\lambda}\frac{e^{-bx}(e^{-x}-e^{-\lambda
x})}{1-e^{-x}}=:Ce^{-bx}+r(x)$$ we conclude that if $\lambda>1$,
then $r^+(x)=r(x)\to (\lambda-1)/\lambda$ as $x\to 0+$ and
$r^+(x)=O(e^{-(b+1)x})$ as $x\to\infty$, whereas if $\lambda\in
(0,1)$, then $r^-(x)=-r(x)\to (1-\lambda)/\lambda$ as $x\to 0+$
and $r^-(x)=O(e^{-(b+\lambda)x})$ as $x\to\infty$. Thus, in both
cases conditions \eqref{cond1} and \eqref{r_1_integreable} are
satisfied.

Let $Y$ be a random variable which is independent of $B$ and has a
$\beta(b,\lambda)$ distribution. It can be checked that
$$
\me e^{-{\rm i}tY}= \frac{\Gamma(b-\iii
t)\Gamma(b+\lambda)}{\Gamma(b)\Gamma(b+\lambda-\iii t)},\quad
t\in\mr.$$ On the other hand, formula 3.413(1) in
\cite{Gradshteyn+Ryzhik:2000} yields
\begin{multline*}
\exp\bigg(\lambda\int_0^t\frac{\Phi(u)-1}{u}{\rm
d}u\bigg)=\exp\bigg(\lambda\int_0^{\infty}\frac{e^{\iii u
t}-1}{u}\mmp\{B>u\}{\rm d}u\bigg)\\
=\exp\bigg(\int_0^\infty \frac{e^{\iii u
t}-1}{u}\frac{e^{-bu}(1-e^{-\lambda u})}{1-e^{-u}}{\rm
d}u\bigg)=\frac{\Gamma(b-\iii
t)\Gamma(b+\lambda)}{\Gamma(b)\Gamma(b+\lambda-\iii t)},
\end{multline*}
whence
$$
\exp\bigg(\lambda\int_0^t\frac{\Phi(u)-1}{u}{\rm d}u\bigg)=\me
e^{-{\rm i}tY}.$$ This representation can be read off from Example
9.2.3 in \cite{Bondesson:1992}, but both the setting and the proof
given in \cite{Bondesson:1992} are slightly different from ours.
Using \eqref{chf} we conclude that $X$ has the same distribution
as $-\log Y+B$. This representation enables us to find the
asymptotics
\begin{eqnarray*}
\mmp\{X>x\}&=&\mmp\{-\log Y>x\}+\mmp\{-\log Y+B>x, -\log Y\leq
x\}\\&=&o(xe^{-bx})+\frac{1}{\lambda {\rm B}(b,\lambda)}\int_0^x
e^{-b(x-y)}e^{-by}(1-e^{-y})^{\lambda-1}{\rm d}y\sim
\frac{1}{\lambda {\rm B}(b,\lambda)}xe^{-bx}
\end{eqnarray*}
as $x\to\infty$. An application of Theorem \ref{main2} in
combination with already used formula 3.413(1) in
\cite{Gradshteyn+Ryzhik:2000} gives the same asymptotics. We omit
details.
\end{ex}

\section{Criteria for the finiteness of the one-sided exponential
moments}\label{crit}

Throughout the rest of the paper we shall often assume that the
following nondegeneracy conditions hold:
\begin{equation}\label{degen1}
\mmp\{A=0\}=0~\text{and}~\mmp\{B=0\}<1
\end{equation}
and
\begin{equation}\label{degen2}
\mmp\{B+Ac=c\}<1~\text{for all}~c\in\mr.
\end{equation}
Also, we shall make a repeated use of the following well known
decomposition
\begin{eqnarray}\label{decomp}
X&=&B_1+A_1B_2+\ldots+A_1\cdot\ldots\cdot
A_{\tau-1}B_\tau+A_1\cdot\ldots\cdot
A_\tau(B_{\tau+1}+A_{\tau+1}B_{\tau+2}+\ldots)\notag\\&=:&X_\tau+\Pi_\tau
X^{(\tau)},
\end{eqnarray}
where $\tau\geq 1$ is either deterministic or a stopping time
w.r.t.\ the filtration generated by $(A_k, B_k)_{k\in\mn}$.
Observe that $X^{(\tau)}=B_{\tau+1}+A_{\tau+1}B_{\tau+2}+\ldots$
has the same distribution as $X$ and is independent of $(\Pi_\tau,
X_\tau)$. This particularly shows that $X$ is a perpetuity
generated by $(\Pi_\tau, X_\tau)$.

Some of our subsequent arguments will rely upon Proposition
\ref{AIR} given below which is a criterion for the finiteness of
$\me e^{r|X|}$. Parts (a) and (b) of Proposition \ref{AIR} are
Theorems 1.6 and 1.7 in \cite{Alsmeyer+Iksanov+Roesler:2009},
respectively.
\begin{prop}\label{AIR}
(a) Suppose \eqref{degen1}, \eqref{degen2} and $\mmp\{|A|=1\}=0$,
and let $r>0$. Then $\me e^{r|X|}<\infty$ if, and only if,
$$\mmp\{|A|<1\}=1~\text{and}~\me e^{r|B|}<\infty.$$

\noindent (b) Suppose \eqref{degen1}, \eqref{degen2} and
$\mmp\{|A|=1\}\in (0,1)$, and let $r>0$. Then $\me
e^{r|X|}<\infty$ if, and only if, $$\mmp\{|A|\leq 1\}=1,\quad \me
e^{r|B|}<\infty$$ and $$\me e^{-rB}\1_{\{A=-1\}}\me
e^{rB}\1_{\{A=-1\}}<(1-\me e^{-rB}\1_{\{A=1\}})(1-\me
e^{rB}\1_{\{A=1\}}).$$
\end{prop}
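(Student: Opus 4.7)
The proof splits naturally into a necessity and a sufficiency direction in each of (a) and (b), and the fundamental tool in all four arguments is the distributional fixed-point identity $X\overset{d}{=}B+AX'$ with $X'$ independent of $(A,B)$ (the case $\tau=1$ of \eqref{decomp}) together with its iterated version for a well-chosen stopping time $\tau$.

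For the necessity of $\mmp\{|A|\le 1\}=1$ in both parts, I would argue by contradiction: if $\mmp\{|A|>1\}>0$, pick $\delta>0$ with $\mmp\{|A|\ge 1+\delta\}>0$ and iterate the identity $n$ times, producing, on an event of positive probability, a term $(1+\delta)^n|X^{(n)}|$ inside $|X|$. Combining this with $\me e^{r|X|}<\infty$ and a Borel--Cantelli-type argument on $\{|\Pi_n|\ge(1+\delta)^n\}$ would be incompatible with a.s. convergence of the perpetuity. Once $|A|\le 1$ is known, the necessity of $\me e^{r|B|}<\infty$ is immediate from $|B|=|X-AX'|\le|X|+|X'|$, hence $\me e^{r|B|}\le(\me e^{r|X|})^2<\infty$.

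For the sufficiency in part (a), the assumption $\mmp\{|A|<1\}=1$ implies $\Pi_n\to 0$ a.s., so for fixed $\alpha\in(0,1)$ the stopping time $\tau:=\inf\{n\ge 1:|\Pi_n|\le\alpha\}$ is a.s. finite. Using \eqref{decomp} gives $|X|\le|X_\tau|+\alpha|X^{(\tau)}|$ with $X^{(\tau)}\overset{d}{=}X$ independent of $(\tau,X_\tau,\Pi_\tau)$, hence
\begin{equation*}
\psi(r):=\me e^{r|X|}\le \me e^{r|X_\tau|}\,\psi(\alpha r).
\end{equation*}
The finiteness of $\me e^{r|X_\tau|}$ follows from $|X_\tau|\le\sum_{k=1}^{\tau}|\Pi_{k-1}||B_k|$, Hölder's inequality, $\me e^{r|B|}<\infty$, and tail bounds on $\tau$ (which decays geometrically because $\mmp\{|\Pi_n|>\alpha\}\to 0$ exponentially). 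The chicken-and-egg difficulty in closing the estimate is handled by first running the argument on the truncated perpetuities $|X|\wedge M$, establishing a uniform-in-$M$ bound from the contraction $\psi_M(r)\le C\,\psi_M(\alpha r)$, and then letting $M\to\infty$ by monotone convergence; this, together with Feller-type continuity at $r=0$, forces $\psi(r)<\infty$.

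For part (b) the new ingredient is that $\mmp\{|A|=1\}\in(0,1)$ contributes genuine cycles to the dynamics. I would decompose the one-step identity according to $\{A=1\}$, $\{A=-1\}$ and $\{|A|<1\}$, giving on the first two events the recursions $X\overset{d}{=}B+X'$ and $X\overset{d}{=}B-X'$ respectively. Writing $\Psi(r):=\me e^{rX}+\me e^{-rX}$ and splitting the expectations along these three events yields an inequality of the form
\begin{equation*}
\Psi(r)\le \alpha_{+}(r)\Psi(r)+\beta(r)\Psi(r)\text{'s of the second kind}+\text{good term},
\end{equation*}
where $\alpha_{+}(r)=\me e^{rB}\1_{\{A=1\}}$ etc.; solving the resulting $2\times 2$ linear system for $\me e^{rX}\1_{\{|A|=1\}}$ requires its determinant to be positive, which is exactly the stated quadratic inequality. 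The converse (sufficiency) uses the same system together with a truncation/contraction scheme as in (a), applied to the $|A|<1$ part of the state space after a geometric number of $|A|=1$ steps.

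The main obstacle is the sufficiency in (b): the contraction from the $|A|<1$ block must dominate the neutral $|A|=1$ dynamics, and extracting this balance cleanly is what forces the precise quadratic criterion rather than a softer bound; it is essentially the spectral radius of a $2\times 2$ operator on the $(\me e^{rX},\me e^{-rX})$ coordinates that must be strictly less than one, and verifying this via a Perron-type argument is the delicate step.
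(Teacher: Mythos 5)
First, a point of reference: the paper does not prove Proposition \ref{AIR} at all --- it is imported verbatim as Theorems 1.6 and 1.7 of \cite{Alsmeyer+Iksanov+Roesler:2009} --- so the only in-paper material to compare with is the closely related one-sided argument for Theorem \ref{main_exp}. Measured against either, your sketch has genuine gaps, the first of which is the necessity of $\mmp\{|A|\leq 1\}=1$. You propose to derive a contradiction with ``a.s.\ convergence of the perpetuity'', but that is the wrong target: when $\mmp\{|A|>1\}>0$ the series can still converge a.s.\ (for instance whenever $\me \log|A|<0$ and $\me\log^+|B|<\infty$); what has to be contradicted is the finiteness of $\me e^{r|X|}$. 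That requires a quantitative lower bound of the type used in the proof of Theorem \ref{main_exp}, where on the event $\{A_i>1+\delta,\ B_i>-c,\ i=1,\ldots,n\}\cap\{X^{(n)}>mc+1\}$ one gets $\me e^{rX}\geq e^{r(1+\delta)^{n-1}}\gamma^n\varepsilon$, and it presupposes that $X$ (or $-X$) takes arbitrarily large values with positive probability. In the one-sided theorem this unboundedness has to be assumed; in the two-sided Proposition \ref{AIR} it must be derived from \eqref{degen1}--\eqref{degen2} (e.g.\ a bounded support invariant under $x\mapsto ax+b$ with $|a|>1$ forces degeneracy), and the sign alternation caused by negative values of $A$ has to be controlled as well. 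None of this appears in your sketch.

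The second gap is the sufficiency direction. Your H\"{o}lder step for $\me e^{r|X_\tau|}$ needs $\me e^{pr|B|}<\infty$ for some $p>1$, which is not available: the hypothesis is exactly $\me e^{r|B|}<\infty$ at the critical exponent $r$ (and $(A,B)$ need not be independent, so $\{\tau=n\}$ cannot be split off from the $B_k$'s), so as written you prove a strictly weaker statement. The truncation scheme is also circular: the inequality $\psi_M(r)\leq C\,\psi_M(\alpha r)$ for the capped variable $|X|\wedge M$ is not justified (capping does not pass through the decomposition \eqref{decomp} in the required way), and even granted, iterating it only pushes the problem to arguments $\alpha^k r$ with constants $C^k$, while the uniform-in-$M$ control near $s=0$ that would close the loop is, by monotone convergence, equivalent to the finiteness of $\me e^{s|X|}$ being proved. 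Finally, in part (b) your linear-system derivation of the determinant inequality from \eqref{equ} is a sound idea for the necessity half, but the sufficiency --- which you yourself flag as the delicate point --- is only named (``Perron-type argument''), not executed, and would inherit the gaps above; in the analogous one-sided situation the paper handles the $|A|=1$ steps by an explicit embedding at the times $\widehat{T}_k$ when the contraction is strict, and something of that concrete nature has to be carried out rather than invoked.
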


Next, we provide necessary and sufficient conditions for the
finiteness of the one-sided moments $\me e^{rX}$ which is a
somewhat more delicate problem. First, we state a criterion for
positive $A$.

\begin{thm}\label{main_exp}
Suppose \eqref{degen1}, \eqref{degen2}, $\mmp\{A>0\}=1$,
$|X|<\infty$ a.s., and let $r>0$. The conditions
\begin{equation}\label{boundA}
\mmp\{A\leq 1\}=1,
\end{equation}
\begin{equation}\label{expB}
\me e^{rB}<\infty\quad \text{and}\quad \me e^{rB}\1_{\{A=1\}}<1
\end{equation}
are sufficient for
\begin{equation}\label{expx}
\me e^{rX}<\infty
\end{equation}
to hold.

Conversely, if the support of the distribution of $X$ is unbounded
from the right, then \eqref{expx} entails \eqref{boundA} and
\eqref{expB}, whereas if the support of the distribution of $X$ is
bounded from the right, then $\me e^{sB}<\infty$ for all $s>0$.
\end{thm}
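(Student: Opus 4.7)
My plan is to prove sufficiency and necessity separately, with necessity further split according to whether the support of $X$ is bounded or unbounded above.

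For the sufficient direction the plan is a two-step reduction. First I group the runs where $A_k=1$: setting $\tau:=\inf\{k\geq 1:A_k<1\}$, the hypothesis $\me[e^{rB}\1_{\{A=1\}}]<1$ forces $\mmp\{A=1\}<1$, so $\tau$ is a.s.\ finite with a geometric distribution. The decomposition \eqref{decomp} at $\tau$ reads $X=S_\tau+A_\tau X^{(\tau)}$ with $S_\tau:=\sum_{k=1}^\tau B_k$ and $X^{(\tau)}\od X$ independent of $(A_\tau,S_\tau)$. A direct summation of a geometric series yields
$$\me e^{rS_\tau}=\frac{\me[e^{rB}\1_{\{A<1\}}]}{1-\me[e^{rB}\1_{\{A=1\}}]}<\infty,$$
so $X$ is a perpetuity generated by $(\tilde A,\tilde B):=(A_\tau,S_\tau)$ with $\tilde A\in(0,1)$ a.s.\ and $\me e^{r\tilde B}<\infty$. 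Second, since all the products $\tilde\Pi_{k-1}$ are nonnegative, I dominate $X\leq Y:=\sum_k\tilde\Pi_{k-1}\tilde B_k^+$, a nonnegative perpetuity in $(\tilde A,\tilde B^+)$ with $\me e^{r\tilde B^+}\leq\me e^{r\tilde B}+1<\infty$. Proposition \ref{AIR}(a), whose hypotheses are satisfied because $|\tilde A|<1$ a.s., gives $\me e^{rY}<\infty$ (trivially in the subcase $\tilde B^+\equiv 0$), hence $\me e^{rX}\leq\me e^{rY}<\infty$.

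For the necessary direction when the support of $X$ is unbounded from the right, the engine is the identity $\psi(r)=\me[e^{rB}\psi(rA)]$, obtained from $X=B+AX'$ with $X'$ an independent copy of $X$ independent of $(A,B)$. Here $\psi$ is positive and convex, and the trivial lower bound $\psi(s)\geq e^{sM}\mmp\{X>M\}$ (for any $M$ in the support), combined with unbounded support, forces $\psi(s)\to\infty$; hence $\psi$ attains a global minimum $\psi(s_0)>0$ and is nondecreasing on $[s_0,\infty)$. To show $\mmp\{A\leq 1\}=1$ I argue by contradiction: suppose $\mmp\{A>1\}>0$ and pick $\alpha'\in(1,\operatorname{ess\,sup}A)$ with $p:=\mmp\{A\geq\alpha'\}>0$; then for $s\geq s_0$ one has $sA\geq s\alpha'\geq s_0$ on $\{A\geq\alpha'\}$, whence
$$\psi(s)\geq\psi(s\alpha')h(s),\qquad h(s):=\me[e^{sB}\1_{\{A\geq\alpha'\}}].$$
Iterating gives $\psi(s{\alpha'}^n)\leq\psi(s)/\prod_{k=0}^{n-1}h(s{\alpha'}^k)$; combined with the standard Laplace asymptotic $\log h(t)/t\to b_1\in(-\infty,\infty]$ (where $b_1$ is the essential supremum of $B$ conditional on $\{A\geq\alpha'\}$) and the lower bound $\psi(s{\alpha'}^n)\geq e^{s{\alpha'}^n M}\mmp\{X>M\}$, this forces $\mmp\{X>M\}=0$ for every $M$ exceeding a finite constant depending on $b_1$ and $\alpha'$, contradicting unbounded support.

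Once $A\leq 1$ a.s.\ is secured, inserting the global bound $\psi\geq\psi(s_0)$ into $\psi(r)=\me[e^{rB}\me[\psi(rA)\mid B]]$ yields $\psi(r)\geq\psi(s_0)\me e^{rB}$, so $\me e^{rB}<\infty$. Splitting the identity at $\{A=1\}$ versus $\{A<1\}$,
$$(1-\me[e^{rB}\1_{\{A=1\}}])\psi(r)=\me[e^{rB}\psi(rA)\1_{\{A<1\}}]\geq 0;$$
the assumption $\me[e^{rB}\1_{\{A=1\}}]\geq 1$ would force $\mmp\{A<1\}=0$, i.e.\ $A=1$ a.s., which combined with \eqref{degen1} contradicts $|X|<\infty$ a.s. Finally, if the support of $X$ is bounded above by some $M_0$, then $\psi(s)\leq e^{sM_0}<\infty$ for all $s>0$, and the very same bound yields $\me e^{sB}\leq\psi(s)/\psi(s_0)<\infty$ for every $s>0$. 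The step I expect to be the main obstacle is the iteration for $\mmp\{A\leq 1\}=1$: one must first verify that $\psi$ is finite on the whole sequence $\{s{\alpha'}^n\}$ (via an a.s.\ chain of implications $\psi(r)<\infty\Rightarrow\psi(rA)<\infty$ a.s.), maintain control of the monotonicity range of $\psi$ throughout the iteration, and perform a case analysis on the sign of $b_1$ in order to beat the doubly-exponential growth on both sides of the iterated estimate.
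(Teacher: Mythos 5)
Your sufficiency proof and the two short necessity claims (finiteness of $\me e^{rB}$ and $\me e^{rB}\1_{\{A=1\}}<1$) follow the paper's argument quite closely: grouping the runs of $A=1$ via $\tau=\inf\{k:A_k<1\}$ and then dominating by the nonnegative perpetuity in $(\tilde A,\tilde B^+)$ is exactly the paper's two-step reduction, and the use of the functional equation $\psi(r)=\me[e^{rB}\psi(rA)]$ in the easy necessity parts mirrors the paper's use of \eqref{equ}. (One slip: $\me[e^{rB}\1_{\{A=1\}}]<1$ does \emph{not} force $\mmp\{A=1\}<1$; this instead comes from \eqref{degen1}, \eqref{degen2} and $|X|<\infty$ a.s.) Where you genuinely diverge is the proof that \eqref{expx} plus unbounded support implies \eqref{boundA}. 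The paper constructs a direct probabilistic lower bound: it picks $\delta,c,\gamma$ with $\mmp\{A>1+\delta,B>-c\}=\gamma$, proves the elementary inequality $1+a_1+\dots+a_1\cdots a_n\le m\,a_1\cdots a_n$, and shows via the decomposition at $\tau=n$ that $\me e^{rX}\ge e^{r(1+\delta)^{n-1}}\gamma^n\varepsilon\to\infty$. This is short, self-contained, and needs no moment-generating asymptotics. Your route instead iterates $\psi(s)\ge\psi(s\alpha')h(s)$ and compares the doubly exponential upper bound on $\psi(s\alpha'^n)$ against the lower bound $e^{s\alpha'^n M}\mmp\{X>M\}$; as you note, this requires verifying $\psi<\infty$ along the whole sequence, locating the monotonicity threshold $s_0$, and a case analysis on the Laplace exponent $b_1$ of $h$. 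Having checked it, the plan does go through (the iteration propagates finiteness of $\psi$ automatically, $h$ is forced to be finite and log-convex, and the two cases $b_1$ finite / $b_1=+\infty$ both yield a contradiction), but it is markedly heavier than the paper's combinatorial construction and you are right to flag it as the fragile step. Finally, a small but real gap in the bounded-support case: writing ``$\me e^{sB}\le\psi(s)/\psi(s_0)$'' via a global minimum $\psi(s_0)>0$ fails when $\mmp\{X>0\}=0$, since then $\psi$ is nonincreasing on $[0,\infty)$ and $\inf\psi=0$ (the distribution of $X$ is continuous). The paper avoids this by treating $X\le 0$ a.s.\ separately (arguing $B\le 0$ a.s.\ there) and, when $\mmp\{X>0\}>0$, splitting $\{A\le 1\}$ from $\{A>1\}$ using log-convexity and the threshold $s_0$ with $\psi(s_0)=1$; you should add a similar case split, since in the bounded case you cannot yet assume $A\le 1$ a.s.
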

\begin{rem}
As far as condition \eqref{boundA} is concerned, the assumption
about unboundedness of the support of the distribution of $X$ is
indispensable. For a trivial counterexample, just take a.s.\
nonpositive $B$, so that $X\in [-\infty, 0]$ a.s. Then $\me
e^{rX}<\infty$ for each $r>0$, irrespective of whether
$\mmp\{A>1\}$ is positive or equals zero. More interestingly, the
support of the distribution of $X$ can be bounded from the right
even if $\mmp\{B>0, A\neq 1\}>0$ and $\mmp\{A>1\}>0$. Indeed,
assume that the last two inequalities hold true, that
$\mmp\{A>0\}=1$ and that $$\Pi_{\tau}m+X_\tau=A_1\cdot\ldots\cdot
A_\tau m+ B_1+\ldots+B_\tau \le m\quad \text{a.s.}$$ for some real
$m$, where $\tau:=\inf\{k\in\mn: \Pi_k\neq 1\}$ (here, we have
used decomposition \eqref{decomp} with the particular $\tau$).
Then $X\le m$ a.s. (see Lemma 2.5.7 and Figure 2.4(c) in
\cite{Buraczewski+Damek+Mikosch:2016}) whence $\me e^{rX}<\infty$
for each $r>0$ yet $\mmp\{A>1\}>0$.
\end{rem}
\begin{rem}\label{impo}
A perusal of the proof of Theorem \ref{main_exp} reveals that $\me
e^{rX}<\infty$ in combination with $\mmp\{A\in (0,1]\}=1$ entails
$\me e^{rB}\1_{\{A=1\}}<1$, irrespective of whether the support of
the distribution of $X$ is bounded or not.
\end{rem}

\begin{rem}\label{neg}
Passing to the case where $A$ is negative with positive
probability we first single out a simpler situation in which
$\mmp\{A=-1\}>0$. Then $\me e^{rX}<\infty$ if, and only if, $\me
e^{r|X|}<\infty$. Assume that $\psi(r)=\me e^{rX}<\infty$.
Decomposition \eqref{decomp} with $\tau=1$ is equivalent to
\begin{equation}\label{equ}
\psi(r)=\me e^{rB}\psi(rA).
\end{equation}
Now we use \eqref{equ} to obtain
$$\psi(r)=\me e^{rB}\psi(rA)\geq \me e^{rB}\1_{\{A=-1\}}\psi(-r)$$
which shows that $\psi(-r)<\infty$ whence $\me e^{r|X|}\leq
\psi(r)+\psi(-r)<\infty$. This proves the $\Rightarrow$
implication, the implication $\Leftarrow$ being trivial. Thus,
whenever $\mmp\{A=-1\}>0$ a criterion for the finiteness of $\me
e^{rX}$ coincides with that for the finiteness $\me e^{r|X|}$. The
latter is given in Proposition \ref{AIR}.
\end{rem}

When $A$ takes values of both signs with positive probability and
$\mmp\{A=-1\}=0$ we can only prove a criterion under the
additional assumption that $B$ is a.s.\ nonnegative.
\begin{thm}\label{main_exp120}
Suppose \eqref{degen1}, \eqref{degen2}, $\mmp\{A=-1\}=0$,
$|X|<\infty$ a.s., and let $r>0$.

\noindent Assume that $\mmp\{A<0\}\mmp\{A>0\}>0$ and $\mmp\{B\geq
0\}=1$. Then \eqref{expx} holds if, and only if,
\begin{equation}\label{boundAA}
\mmp\{|A|\leq 1\}=1
\end{equation}
and condition \eqref{expB} holds.

\noindent Assume that $\mmp\{A<0\}=1$. Then \eqref{expx} holds if,
and only if, condition \eqref{boundAA} holds and
\begin{equation}\label{bound_compl}
\me e^{r(B_1+A_1B_2)}<\infty.
\end{equation}
\end{thm}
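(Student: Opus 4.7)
The plan is to reduce both cases to Theorem \ref{main_exp} by passing to the \emph{two-step contracted perpetuity}. Set $\tilde A_k:=A_{2k-1}A_{2k}$ and $\tilde B_k:=B_{2k-1}+A_{2k-1}B_{2k}$; the pairs $(\tilde A_k,\tilde B_k)_{k\geq 1}$ are iid, and iterating decomposition \eqref{decomp} with $\tau=2$ gives $X=\sum_{k\geq 1}\tilde\Pi_{k-1}\tilde B_k$ with $\tilde\Pi_k=A_1\cdots A_{2k}$, so $X$ is also a perpetuity generated by $(\tilde A,\tilde B)$. Since $\mmp\{A=0\}=0$, $\tilde A>0$ a.s., placing us within the scope of Theorem \ref{main_exp}. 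Two observations will be used repeatedly: first, $\tilde A\leq 1$ a.s.\ is equivalent to $|A|\leq 1$ a.s., because by independence $\mmp\{\tilde A>1\}\geq\mmp\{|A|>1\}^2$; second, once $|A|\leq 1$ a.s., the assumption $\mmp\{A=-1\}=0$ makes $\{\tilde A=1\}$ coincide a.s.\ with $\{A_1=A_2=1\}$.

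Case 2 ($A<0$ a.s.) is then immediate: $\{\tilde A=1\}$ is null (so $\me e^{r\tilde B}\1_{\{\tilde A=1\}}=0<1$ for free), $\me e^{r\tilde B}=\me e^{r(B_1+A_1B_2)}$, and both implications fall out of Theorem \ref{main_exp} applied to $(\tilde A,\tilde B)$.

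For Case 1 ($B\geq 0$ a.s., both signs of $A$), sufficiency rests on the bound $A_1B_2\leq B_2$, which holds because $A_1\leq 1$ and $B_2\geq 0$; hence $\tilde B\leq B_1+B_2$, so $\me e^{r\tilde B}\leq(\me e^{rB})^2<\infty$ by independence of $B_1$ and $B_2$. On $\{\tilde A=1\}=\{A_1=A_2=1\}$ we have $\tilde B=B_1+B_2$, so
\[
\me e^{r\tilde B}\1_{\{\tilde A=1\}}=\bigl(\me e^{rB}\1_{\{A=1\}}\bigr)^2<1,
\]
and Theorem \ref{main_exp} concludes $\me e^{rX}<\infty$. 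For the converse, Theorem \ref{main_exp} applied to $(\tilde A,\tilde B)$ yields $|A|\leq 1$ a.s., $\me e^{r\tilde B}<\infty$ and $\me e^{r\tilde B}\1_{\{\tilde A=1\}}<1$; extracting square roots in the last item gives $\me e^{rB}\1_{\{A=1\}}<1$, and restricting $\tilde B$ to $\{A_1\geq 0\}$, where $A_1B_2\geq 0$, yields $\me[e^{rB_1}\1_{\{A_1\geq 0\}}]\leq\me e^{r\tilde B}<\infty$ at once.

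The main obstacle is bounding the complementary piece $\me[e^{rB_1}\1_{\{A_1<0\}}]$, on which the two-step contraction degenerates. For this I would use the one-step identity $X=B_1+A_1X^{(1)}$ with $X^{(1)}\od X$ independent of $(A_1,B_1)$: pick $M$ large enough that $\mmp\{X^{(1)}\leq M\}\geq 1/2$ (possible since $X$ is a.s.\ finite); then on $\{A_1<0,\,X^{(1)}\leq M\}$ the inequalities $-1\leq A_1<0$ force $A_1X^{(1)}\geq -M$, hence $X\geq B_1-M$. Independence of $X^{(1)}$ from $(A_1,B_1)$ gives
\[
\me e^{rX}\;\geq\;\tfrac12\,e^{-rM}\,\me\bigl[e^{rB_1}\1_{\{A_1<0\}}\bigr],
\]
so the missing moment is finite and $\me e^{rB}<\infty$ follows. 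Bounded-support caveats in invoking the necessity part of Theorem \ref{main_exp} are absorbed by its supplementary assertion that $\me e^{sB}<\infty$ for every $s>0$ whenever the support of $X$ is bounded from the right.
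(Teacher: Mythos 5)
Your reduction fails in Case 1 at the very first step: the claim ``Since $\mmp\{A=0\}=0$, $\tilde A>0$ a.s.'' is false when $A$ takes values of both signs. With $\tilde A=A_1A_2$ and $A_1,A_2$ independent, one has $\mmp\{\tilde A<0\}=2\,\mmp\{A<0\}\mmp\{A>0\}>0$ precisely under the hypotheses of Case 1, so the two-step contracted perpetuity $(\tilde A,\tilde B)$ does \emph{not} fall within the scope of Theorem~\ref{main_exp} (whose hypothesis is $\mmp\{A>0\}=1$). Everything you build on top of this --- the sufficiency argument via $\me e^{r\tilde B}\1_{\{\tilde A=1\}}<1$, the necessity step extracting $|A|\le1$ a.s.\ and $\me e^{r\tilde B}<\infty$ --- is therefore unjustified in Case 1. (Case 2 with $A<0$ a.s.\ is fine: there $\tilde A>0$ a.s.\ and your argument matches the paper's.)

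The paper avoids exactly this pitfall by contracting over the \emph{random} stopping time $\tau:=\inf\{k\in\mn:\Pi_k>0\}$ rather than the deterministic $\tau=2$; this guarantees $\Pi_\tau>0$ a.s.\ by construction, and then Theorem~\ref{main_exp} applies to the perpetuity generated by $(\Pi_\tau,X_\tau)$. For the necessity of $|A|\le1$ a.s.\ the paper does not route through Theorem~\ref{main_exp} at all: it uses the one-step identity $\psi(r)=\me e^{rB}\psi(rA)\ge \me e^{rB}\psi(rA)\1_{\{A<0\}}$ to deduce $\psi(-s)<\infty$ for some $s\in(0,r]$, hence $\me e^{s|X|}<\infty$, and then invokes Proposition~\ref{AIR}. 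Your supplementary one-step argument for $\me[e^{rB_1}\1_{\{A_1<0\}}]<\infty$ is actually a nice and correct idea, but it presupposes $|A|\le1$ a.s., which at that point you have not legitimately established. To salvage the proposal, replace the deterministic two-step contraction in Case 1 either by the paper's random $\tau$, or establish $|A|\le1$ a.s.\ first via the one-step inequality and Proposition~\ref{AIR} and then bound $X$ by the nonnegative perpetuity $X^\ast:=\sum_{k\ge1}|A_1|\cdots|A_{k-1}|B_k$ generated by $(|A|,B)$, to which Theorem~\ref{main_exp} does apply (note $\me e^{rB}\1_{\{|A|=1\}}=\me e^{rB}\1_{\{A=1\}}$ since $\mmp\{A=-1\}=0$).
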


\section{Proofs of Theorems \ref{main_exp} and \ref{main_exp120}, and Proposition
\ref{main}}\label{11}

\begin{proof}[Proof of Theorem \ref{main_exp}]

\noindent {\sc Proof of \eqref{boundA}, \eqref{expB} $\Rightarrow$
\eqref{expx}}. Assume first that $A\in (0,1)$ a.s., i.e.,
$\mmp\{A=1\}=0$, so that we have to show that $\me e^{rB}<\infty$
entails $\me e^{rX}<\infty$ or, equivalently, that
\begin{equation}\label{impl1}
\me e^{rB^+}<\infty~\Rightarrow~ \me e^{rX^+}<\infty.
\end{equation}

Since the function $x\mapsto x^+$ is subadditive on $\mr$ and
satisfies $(\alpha x)^+=\alpha x^+$ for $\alpha>0$ and $x\in\mr$
we infer
$$\exp\,[rX^+]=\exp\bigg[r\bigg(\sum_{k\geq 1}\Pi_{k-1}B_k\bigg)^+\bigg]\leq \exp\bigg[r\sum_{k\geq 1}\Pi_{k-1}B_k^+\bigg] =:\exp[rX^\ast].$$
The random variable $X^\ast\geq 0$ is a perpetuity generated by
$(A, B^+)$. Hence, by Proposition \ref{AIR} $\me e^{rB^+}<\infty$
entails $\me e^{rX^\ast}<\infty$ and thereupon \eqref{impl1}.

Assuming that $A\in (0,1]$ a.s.\ and that $\mmp\{A=1\}\in (0,1)$
we must check that $\me e^{rB}<\infty$ together with $\me
e^{rB}\1_{\{A=1\}}<1$ guarantees $\me e^{rX}<\infty$. Put
$\widehat{T}_0:=0$, $\widehat{T}_k:=\inf\{n>\widehat{T}_{k-1}:
A_n<1\}$ for $k\in\mn$ and then
$$\widehat{A}_k:=A_{\widehat{T}_{k-1}+1}\cdot\ldots\cdot
A_{\widehat{T}_k},\quad
\widehat{B}_k=B_{\widehat{T}_{k-1}+1}+A_{\widehat{T}_{k-1}+1}B_{\widehat{T}_{k-1}+2}+\ldots+A_{\widehat{T}_{k-1}+1}\cdot\ldots\cdot
A_{\widehat{T}_k-1}B_{\widehat{T}_k} $$ for $k\in\mn$. The vectors
$(\widehat{A}_1,\widehat{B}_1)$, $(\widehat{A}_2,
\widehat{B}_2),\ldots$ are independent and identically distributed
and $X=\widehat{B}_1+\sum_{n\geq 1}\widehat{A}_1\cdot\ldots\cdot
\widehat{A}_{n-1}\widehat{B}_n$. Since
\begin{eqnarray*}
\me e^{r\widehat{B}_1}&=&\sum_{n\geq 1}\me
e^{r(B_1+A_1B_2+\ldots+A_1\cdot\ldots\cdot
A_{n-1}B_n)}\1_{\{A_1=\ldots=A_{n-1}=1, A_n<1\}}\\&=& \me
e^{rB}\1_{\{A<1\}}\sum_{n\geq 1}\big(\me
e^{rB}\1_{\{A=1\}}\big)^{n-1}=\frac{\me e^{rB}\1_{\{A<1\}}}{1-\me
e^{rB}\1_{\{A=1\}}}<\infty
\end{eqnarray*}
and $\mmp\{\widehat{A}_1=1\}=0$ we conclude that $\me
e^{rX}<\infty$ by the previous part of the proof.

\noindent {\sc Proof of \eqref{expx} $\Rightarrow$
\eqref{boundA}}. Assuming that the support of the distribution of
$X$ is unbounded from the right we intend to prove that
$\mmp\{A>1\}>0$ entails $\me e^{rX}=\infty$ for any $r>0$, thereby
providing a contradiction.

In view of $\mmp\{A>1\}>0$ there exist positive constants
$\delta$, $c$ and $\gamma\in (0,1)$ such that
\begin{equation}\label{eq:d1}
\mmp\{A>1+\delta,~ B > -c \} = \gamma.
\end{equation}
Let $(a_i)_{i\in\mn}$ be any sequence satisfying $a_i> 1+\delta$
for all $i\in\mn$. Pick now large enough $m$ such that $m/(m-1)\le
1+\delta$. For the subsequent proof we need the following
inequality
\begin{equation}\label{ineq}
1 + a_1 + a_1a_2 +\ldots + a_1\ldots a_n \le m a_1\ldots a_n
\end{equation}
which will be proved by the mathematical induction. For $n=1$
\eqref{ineq} holds because $m-1\geq 1/(1+\delta)\geq 1/a_1$.
Assuming that \eqref{ineq} holds true for $n=k$ we have
\begin{multline*}
1 + a_1 + a_1a_2 +\ldots + a_1\cdot\ldots\cdot a_k  + a_1\cdot\ldots\cdot a_ka_{k+1}\\
 \le a_1\cdot\ldots\cdot a_k (m + a_{k+1}) \le ma_1\cdot\ldots\cdot a_k a_{k+1} (1/a_{k+1}+1/m) \leq m a_1\cdot\ldots\cdot a_{k+1},
\end{multline*}
by our choice of $m$. Thus, \eqref{ineq} holds for $n=k+1$.

Using \eqref{decomp} with $\tau=n$ gives $X = X_n + \Pi_n
X^{(n)}$. By assumption, $X$ takes arbitrarily large values with
positive probability which implies that $\mmp\{X^{(n)}>mc+1
\}=\mmp\{ X>mc+1 \} = \varepsilon$ for some $\varepsilon>0$ and
all $n\in\mn$. With this at hand, we have for any $n\in\mn$ and
any $r>0$
\begin{eqnarray*}
\me e^{rX}&=& \me e^{r(X_n+\Pi_n X^{(n)})}\\
 &\ge & \me \Big[ e^{r(X_n+\Pi_n X^{(n)})} \1_{\{ A_i > 1+\delta,\, B_i >-c\ \mbox{ for } i=1,\ldots,n \}}\1_{\{ X^{(n)} > mc+1 \}} \Big] \\
 &\ge & \me \Big[ e^{r\big(-c(1+A_1+\ldots + A_1\cdot\ldots\cdot A_{n-1})+ \Pi_n X^{(n)} \big)}\\&\times& \1_{\{ A_i > 1+\delta,\ B_i >  -c\ \mbox{ for } i=1,\ldots,n \}}\1_{\{ X^{(n)} > mc+1 \}} \Big] \\
 &\overset{\eqref{ineq}}{\ge} & \me \Big[ e^{r\big( -mc \Pi_{n-1}+ \Pi_n X^{(n)} \big)} \1_{\{ A_i > 1+\delta,\, B_i >  -c\ \mbox{ for } i=1,\ldots,n \}}\1_{\{ X^{(n)} > mc+1 \}} \Big] \\
 &\ge & \me \Big[ e^{r\big(  \Pi_{n-1}( A_n X^{(n)} - mc)\big)} \1_{\{ A_i > 1+\delta,\ B_i >  -c\ \mbox{ for } i=1,\ldots,n \}}\1_{\{ X^{(n)} > mc+1 \}} \Big] \\
&\ge& e^{r(1+\delta)^{n-1}} \gamma^n \varepsilon.
\end{eqnarray*}
Letting $n$ tend to $\infty$ we obtain $\me e^{rX}=\infty$.

\noindent {\sc Proof of $\eqref{expx}\Rightarrow \eqref{expB}$}.
Assume that $\psi(r)=\me e^{rX}<\infty$ for some $r>0$ and that
the support of the distribution of $X$ is unbounded from the
right. Then $\mmp\{A\leq 1\}=1$ by the previous part of the proof.
Put $c:=\min_{0\leq t\leq r}\,\psi(t)$ and note that $c>0$. Since
$\me e^{rB}\psi(rA)\geq c\me e^{rB}$, the proof is complete in the
case $\mmp\{A=1\}=0$ in view of \eqref{equ}. Suppose now that
$\mmp\{A=1\}\in (0,1)$. In order to check the second inequality in
\eqref{expB} we use once again \eqref{equ} to infer
$$\psi(r)=\me e^{rB}\psi(rA)\1_{\{A<1\}}+\psi(r)\me e^{rB}\1_{\{A=1\}}>\psi(r)\me e^{rB}\1_{\{A=1\}},$$ where the strict inequality
follows from $\mmp\{A<1\}>0$. Now $\me e^{rB}\1_{\{A=1\}}<1$ is a
consequence of the last displayed formula.

It remains to show that $\me e^{sB}<\infty$ for all $s>0$ provided
that the support of the distribution of $X$ is bounded from the
right. If $X\leq 0$ a.s., then $B\leq 0$ a.s.\ whence $\me
e^{sB}\leq 1$ for all $s>0$. Assume now that $\mmp\{X>0\}>0$. This
implies that $\lim_{s\to\infty}\,\psi(s)=\infty$. The latter
together with log-convexity of $\psi$ and its finiteness for all
positive arguments ensures the existence of $s_0\geq 0$ such that
$\psi(s_0)=1$ and $\psi(t)>1$ for any $t>s_0$ (note that $s_0=0$
if $\mmp\{X>0\}=1$, and $s_0>0$ if $\mmp\{X>0\}\in (0,1)$). Using
\eqref{equ} we obtain for $t>s_0$
\begin{eqnarray*}
\psi(t)&=&\me e^{tB}\psi(tA)\1_{\{A\leq 1\}}+\me
e^{tB}\psi(tA)\1_{\{A> 1\}}\geq c_1\me e^{tB}\1_{\{A\leq 1\}}+\me
e^{tB}\1_{\{A> 1\}}\\&\geq& c_1\me e^{tB},
\end{eqnarray*}
where $c_1:=\min_{0\leq s\leq t}\,\psi(s)\in (0,1)$. The proof of
Theorem \ref{main_exp} is complete.
\end{proof}

\begin{proof}[Proof of Theorem \ref{main_exp120}]
We start by showing that \eqref{expx} in combination with
$\mmp\{A<0\}>0$ entails \eqref{boundAA}. Indeed, as a consequence
of \eqref{equ} we infer $$\psi(r)\geq \me
e^{rB}\psi(rA)\1_{\{A<0\}},$$ whence $\psi(-s)<\infty$ for some
$s\in (0,r]$ and thereupon $\me e^{s|X|}\leq
\psi(s)+\psi(-s)<\infty$. Hence, \eqref{boundAA} holds true by
Proposition \ref{AIR}.

Assume now that $\mmp\{A\in (-1, 0)\}=1$. Then $\mmp\{A_1A_2\in
(0,1)\}=1$. Using now decomposition \eqref{decomp} with $\tau=2$
we conclude that $\me e^{rX_2}=\me e^{r(B_1+A_1B_2)}<\infty$
ensures \eqref{expx} by Theorem \ref{main_exp}. In the converse
direction, assuming merely that $A$ is a.s.\ negative, so that
$A_1A_2$ is a.s.\ positive we use again \eqref{decomp} with
$\tau=2$ to obtain that \eqref{expx} entails \eqref{bound_compl}.

Throughout the rest of the proof we assume that $A$ takes values
of both signs with positive probability and that $B$ is a.s.\
nonnegative.

\noindent {\sc Proof of \eqref{expB}  and \eqref{boundAA}
$\Rightarrow$ \eqref{expx}}. We shall use representation
\eqref{decomp} with $$\tau:=\inf\{k\in\mn: \Pi_k>0\}.$$ Observe
that $\mmp\{\tau=1\}=\mmp\{A>0\}=:p$ and
$\mmp\{\tau=k\}=p^{k-2}(1-p)^2$ for $k\geq 2$, whence
$\tau<\infty$ a.s. In view of the first condition in \eqref{expB}
\begin{eqnarray*}
\me e^{rX_\tau}&=&\me
e^{r(B_1+\Pi_1B_2+\ldots+\Pi_{\tau-1}B_\tau)}\\&=&\me
e^{rB_1}\1_{\{A_1>0\}}+\sum_{n\geq 2}\me
e^{r(B_1+\ldots+\Pi_{n-1}B_n)}\1_{\{A_1<0,A_2>0,\ldots,
A_{n-1}>0,A_n<0\}}\\&\leq& \me e^{rB}+\me e^{rB}\sum_{n\geq 2}
\mmp{\{A_2>0,\ldots, A_{n-1}>0,A_n<0\}}\leq 2\me e^{rB}<\infty.
\end{eqnarray*}
Further, $\me e^{rX_\tau}\1_{\{\Pi_\tau=1\}}=\me
e^{rB_1}\1_{\{A_1=1\}}<1$ according to the second condition in
\eqref{expB}. Since $\mmp\{\Pi_\tau\in (0,1]\}=1$ we conclude that
\eqref{expx} holds true by Theorem \ref{main_exp} which applies
because $X$ is also the perpetuity generated by $(\Pi_\tau,
X_\tau)$.

\noindent {\sc Proof of \eqref{expx} $\Rightarrow$ \eqref{expB}
and \eqref{boundAA}}. We shall use $\tau$ as above. Recall that we
have already proved that \eqref{expx} ensures \eqref{boundAA} and
thereupon $\mmp\{\Pi_\tau\in (0,1]\}=1$. Hence, $\me
e^{rX}<\infty$ entails $\me e^{rB_1}\1_{\{A_1=1\}}=\me
e^{rX_\tau}\1_{\{\Pi_\tau=1\}}<1$ by Remark \ref{impo} and $\me
e^{rX_\tau}<\infty$ by Theorem \ref{main_exp}. In particular,
$$\infty>\me e^{rX_\tau}\1_{\{\tau=1\}}=\me e^{rB}\1_{\{A>0\}}$$
and
$$\infty>\me e^{rX_\tau}\1_{\{\tau=2\}}=\me
e^{r(B_1+A_1B_2)}\1_{\{A_1<0, A_2<0\}}\geq \me
e^{rB}\1_{\{A<0\}}\me e^{-rB}\1_{\{A<0\}}$$ whence $\me
e^{rB}<\infty$. The proof of Theorem \ref{main_exp120} is
complete.
\end{proof}

\begin{proof}[Proof of Proposition \ref{main}]
In view of our remark in the introduction we only prove part (I).

For $k\in\mn$, put $(A_k^\ast, B_k^\ast):=(A_k, A_kB_{k+1})$. The
vectors $(A_1^\ast, B_1^\ast), (A_2^\ast, B_2^\ast),\ldots$ are
independent and identically distributed, and
$$A_1B_2+A_1A_2B_3+\ldots=B_1^\ast+A_1^\ast B_2^\ast+A_1^\ast
A_2^\ast B_3^\ast+\ldots=:X^\ast$$ which shows that the left-hand
side is a perpetuity generated by $(A_k^\ast,
B_k^\ast)_{k\in\mn}$. This implies $\me \psi(rA)=\me
e^{r(A_1B_2+A_1A_2B_3+\ldots)}=\me e^{rX^\ast}$.

\noindent {\sc Case} (a). By Theorem \ref{main_exp} $\me
e^{rX^\ast}<\infty$ if, and only if, $\infty>\me e^{rB_1^\ast}=\me
\varphi(rA)$ and $1>\me e^{rB_1^\ast}\1_{\{A_1^\ast=1\}}=\me
e^{rB}\mmp\{A=1\}$. If $\mmp\{A=1\}=0$, the last inequality holds
automatically, whereas if $\mmp\{A=1\}\in (0,1)$ it entails
$\varphi(r)<\infty$ and thereupon $\me \varphi(rA)<\infty$ because
$A\in (0,1]$ a.s.

\noindent {\sc Case} (b). By Theorem \ref{main_exp120}, $\me
e^{rX^\ast}<\infty$ if, and only if, $\infty>\me
e^{r(B_1^\ast+A_1^\ast B_2^\ast)}=\me e^{rA_1(B_2+A_2B_3)}$.

\noindent {\sc Case} (c). According to Remark \ref{neg} and
Proposition \ref{AIR}, $\me e^{rX^\ast}<\infty$ if, and only if,
$\infty>\me e^{r|B_1^\ast|}=\me e^{r|A_1B_2|}$ and
$$\me e^{-rB_1^\ast}\1_{\{A_1^\ast=-1\}}\me e^{rB_1^\ast}\1_{\{A_1^\ast=-1\}}<(1-\me e^{-rB_1^\ast}
\1_{\{A_1^\ast=1\}})(1-\me e^{rB_1^\ast}\1_{\{A_1^\ast=1\}}).$$
The latter is equivalent to
\begin{equation}\label{aux1}
\me e^{-rB}\me e^{rB}[\mmp\{A=-1\}]^2<(1-\me
e^{-rB}\mmp\{A=1\})(1-\me e^{rB}\mmp\{A=1\})
\end{equation}
which entails $$\me e^{r|A_1B_2|}\leq \me e^{r|B|}\leq \me
e^{-rB}+\me e^{rB}<\infty.$$ Thus, $\me e^{rX^\ast}<\infty$ if,
and only if, \eqref{aux1} holds.
\end{proof}

\section{Proof of Theorem \ref{thm:p<1}}\label{12}

Our proof of Theorem \ref{thm:p<1} is based on two auxiliary
results.

\begin{lem}\label{lem:1iter}
Suppose \eqref{cond4} with $c<-1$, \eqref{eq:momB1},
\eqref{eq:tailB} and $\mmp\{A\in (0,1]\}=1$. Let $Y$ be a random
variable independent of $(A,B)$ which satisfies
\begin{equation}\label{x5}
\mmp\{Y>x\}~\sim~ c_Y \mmp\{B>x\},\quad x\to\infty
\end{equation}
for some constant $c_Y>0$. Then $\me f(Y)<\infty$ and
\begin{equation*}
\mmp\{AY+B>x\}~ \sim~ \big(c_Y \me e^{bB}\1_{\{A=1\}} + \me
f(Y)\big)\mmp\{B>x\},\quad x\to\infty.
    \end{equation*}
\end{lem}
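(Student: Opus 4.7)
The plan is to exploit the independence of $Y$ from $(A,B)$ and rewrite
$$\mmp\{AY+B>x\}=\int_\mr\mmp\{Ay+B>x\}\,F_Y(dy),$$
where $F_Y$ denotes the distribution of $Y$. The integral will then be analyzed by splitting at a large threshold $M$, letting $x\to\infty$ first and $M\to\infty$ afterwards. Before starting, I would verify $\me f(Y)<\infty$ by noting that $A\in(0,1]$ a.s.\ gives $\mmp\{Ay+B>x\}\leq\mmp\{B>x-y^+\}$, so \eqref{cond4} yields $f(y)\leq e^{by^+}$; the tail assumption on $Y$ together with $c<-1$ then implies $\me f(Y)\leq\me e^{bY^+}<\infty$.

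On $\{y\leq M\}$ the ratio $\mmp\{Ay+B>x\}/\mmp\{B>x\}$ is uniformly dominated by $\mmp\{B>x-M\}/\mmp\{B>x\}\to e^{bM}$, while \eqref{eq:tailB} supplies the pointwise limit $f(y)$. Dominated convergence therefore yields
$$\int_{-\infty}^M\mmp\{Ay+B>x\}\,F_Y(dy)\sim\Bigl(\int_{-\infty}^M f(y)F_Y(dy)\Bigr)\mmp\{B>x\},\quad x\to\infty,$$
and sending $M\to\infty$ accounts for the $\me f(Y)\mmp\{B>x\}$ contribution in the claimed asymptotic.

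For the tail $\int_M^\infty\mmp\{Ay+B>x\}F_Y(dy)$ I would decompose according to $\{A=1\}$ versus $\{A<1\}$. On $\{A<1\}$ the strict contraction combined with the exponential rate $b$ of the tail of $Y$ should make this piece vanish in the double limit $x\to\infty$, $M\to\infty$: one fixes $\epsilon>0$, uses Fubini and a Breiman-type estimate for $A$ bounded away from $1$, and observes that $\mmp\{A\in(1-\epsilon,1)\}\to 0$ as $\epsilon\to 0$ since $\{A<1\}=\bigcup_\epsilon\{A\leq 1-\epsilon\}$. On $\{A=1\}$, the independence of $Y$ together with Fubini yields
$$\mmp\{Y+B>x,\,Y>M,\,A=1\}=\int_\mr\overline{F_Y}\bigl(\max(M,x-\beta)\bigr)\,\mu(d\beta),\quad\mu(d\beta):=\mmp\{B\in d\beta,\,A=1\}.$$
On $\{\beta\leq x/2\}$, \eqref{cond4} gives $\overline{F_Y}(x-\beta)/\mmp\{B>x\}\to c_Ye^{b\beta}$ with uniform domination by $Ce^{b\beta}$; since $\int e^{b\beta}\mu(d\beta)=\me e^{bB}\1_{\{A=1\}}<1$ by \eqref{eq:momB1}, dominated convergence delivers the $c_Y\me e^{bB}\1_{\{A=1\}}\mmp\{B>x\}$ term.

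The main obstacle will be showing that the ``middle-tail'' contribution $\int_{x/2}^\infty\overline{F_Y}(x-\beta)\mu(d\beta)$ coming from the $\{A=1\}$ piece is $o(\mmp\{B>x\})$. The crude bound $\overline{F_Y}\leq 1$ yields only $\mu(x/2,\infty)\leq\mmp\{B>x/2\}$, which exceeds $\mmp\{B>x\}$ by a factor of order $e^{bx/2}$. The remedy is to use the explicit polynomial-times-exponential form $\overline{F_Y}(t)\sim c_Yat^ce^{-bt}$ from \eqref{cond4}, combined with the pointwise bound $\mu(\beta,\infty)\leq\mmp\{B>\beta\}\sim a\beta^ce^{-b\beta}$ (which, via integration by parts, gives $\int_M^\infty e^{b\beta}\mu(d\beta)=O(M^{c+1})$), to absorb the factor $x^{|c|}$ arising upon dividing by $\mmp\{B>x\}\sim ax^ce^{-bx}$—an estimate requiring a careful coupling of the rates $M$ and $x$ in the double limit.
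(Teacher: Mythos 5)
Your decomposition is genuinely different from the paper's: the paper splits at a \emph{moving} threshold $\delta x$ for $Y$ and $(1-\delta)x$ for $B$ (for a fixed $\delta\in(0,1)$), whereas you split $Y$ at a \emph{fixed} threshold $M$ and then invoke a double limit $x\to\infty$, $M\to\infty$. The paper's split has the virtue that the three regions $\{Y\le\delta x\}$, $\{B\le(1-\delta)x\}$ and $\{Y>\delta x,\,B>(1-\delta)x\}$ partition $\{AY+B>x\}$ up to the negligible joint tail (using $A\le 1$), and then each of $I_1$ and $I_2$ is handled by a single application of dominated convergence with a uniform majorant of the form $Me^{by}\delta^c$ resp.\ $Me^{bv}\delta^c$. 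Crucially, the $\{A<1\}$ part of $I_2$ is killed pointwise inside the DCT, since $\mmp\{uY>x-v\}/\mmp\{B>x\}\to 0$ for every $u<1$; no $\epsilon$-splitting of $\{A<1\}$ is needed, and no double limit arises.

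Your version can be pushed through, but the sketch has two genuine gaps. First, for the piece $\{Y>M,\,A<1\}$ your plan relies on the smallness of $\mmp\{A\in(1-\epsilon,1)\}$ to control the contribution from $A$ near $1$; that step implicitly uses independence of $A$ and $B$, which the lemma does \emph{not} assume (see Remark~\ref{refe}, where dependent $(A,B)$ are explicitly allowed and $f(y)\neq\me e^{byA}$). Without independence there is no reason for $\mmp\{AY+B>x,\,Y>M,\,A\in(1-\epsilon,1)\}$ to be bounded by $\mmp\{A\in(1-\epsilon,1)\}$ times something of order $\mmp\{B>x\}$; the paper avoids this by disintegrating with respect to $(A,B)$ and letting dominated convergence drive the $u<1$ contribution to $0$. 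Second, in the middle-tail term, the estimate $\int_M^\infty e^{b\beta}\mu(d\beta)=O(M^{c+1})$ is correct, but combining it with the crude bound $(x-\beta)^c\le M^c$ over $\beta\in(x/2,x-M]$ yields $O(M^c\,x^{c+1}e^{-bx})$, which after dividing by $\mmp\{B>x\}\sim ax^ce^{-bx}$ is $O(M^c x)$ and blows up in $x$ for fixed $M$. A valid bound of the order $O(M^{c+1})\cdot\mmp\{B>x\}$ is obtained either by Fubini over $Y$ (bounding $\mu(x-y,x-M]\le\mmp\{B>x-y\}\le C(x/2)^ce^{-bx}e^{by}$ for $y\le x/2$ and then using $\int_M^\infty e^{by}F_Y(dy)=O(M^{c+1})$) or by integrating by parts in $\beta$ while keeping, not discarding, the $(x-\beta)^c$ factor. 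As it stands, "absorbing the $x^{|c|}$ factor by careful coupling of $M$ and $x$" is not a proof; and if $M$ is allowed to grow with $x$, the dominated-convergence step in your first piece no longer applies as stated. So the architecture is viable, but these two steps need to be reworked before the argument is complete.
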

\begin{proof}
Fix $\delta \in (0,1)$. In view of
\begin{equation*}
\mmp\{B >(1-\delta)x, Y > \delta x\}~\sim~ a^2c_Y e^{-bx}x^{2c}
(\delta(1-\delta))^c = o(e^{-bx}x^c),\quad x\to\infty
\end{equation*}
and
\begin{equation*}
\{AY+B>x, B\leq (1-\delta) x \} \subseteq \{AY>\delta x\}
\subseteq \{Y>\delta x \} \quad {\rm a.s.}
\end{equation*}
we have
\begin{eqnarray*}
\mmp\{AY+B>x\}&=& \mmp\{AY+B>x, Y \leq \delta x\} + \mmp\{AY+B>x,
B\leq (1-\delta) x\}\\&+&   o(e^{-bx} x^c)=: I_1(x) + I_2(x) +
o(e^{-bx}x^c),\quad x\to\infty.
\end{eqnarray*}

We claim that
\begin{equation*}
\frac{I_1(x)}{\mmp \{B>x\}} = \int_{( -\infty,\,\delta x]}
\frac{\mmp\{B>x-Ay\}}{\mmp\{B>x\}}\mmp\{Y \in {\rm d}y\}~\to~ \me
f(Y)<\infty,\quad x\to\infty.
\end{equation*}
Indeed, this is a consequence of \eqref{eq:tailB} and the Lebesgue
dominated convergence theorem in combination with the following
two facts: (i)
\begin{equation*}
\frac{\mmp \{B>x-Ay\}}{\mmp\{B>x\}} \leq \frac{\mmp
\{B>x-y\}}{\mmp \{B>x\}} \leq M e^{by}\bigg(\frac{x-y}{x}\bigg)^c
\leq M e^{by} (1-\delta )^c
\end{equation*}
for large enough $x$, $y\in [0,\delta x]$ and an appropriate
$M>0$;
 \begin{equation*}
\frac{\mmp \{B>x-Ay\}}{\mmp\{B>x\}} \leq 1
\end{equation*}
for all $x>0$ and all $y<0$, and (ii) $\me e^{bY}<\infty$ which is
an easy consequence of \eqref{cond4} and \eqref{x5}.

Passing to the analysis of $I_2(x)$ we observe that
\begin{equation*}
\lix \frac{\mmp\{uY > x-v\}}{\mmp\{B>x\}}=c_Y e^{bv} \1_{ \{ u=1
\}}
\end{equation*}
for $u\in (0,1]$ and $v\in\mr$. Furthermore,
\begin{equation*}
\frac{\mmp\{uY > x-v\}}{\mmp\{B>x\}} \leq \frac{\mmp\{Y >
x-v\}}{\mmp\{B>x\}} \leq M e^{bv} \bigg(\frac{x-v}{x}\bigg)^c \leq
M e^{bv} \delta^c
\end{equation*}
for large enough $x$, all $u\in [0,1]$, $v\in [0, (1-\delta)x]$
and some appropriate $M>0$, and
\begin{equation*}
\frac{\mmp \{uY>x-v\}}{\mmp\{B>x\}} \leq M_1
\end{equation*}
for large enough $x$, all $u\in [0,1]$, $v<0$ and appropriate
$M_1>0$.

Recalling that $\me e^{bB}<\infty$ we infer
\begin{equation*}
\frac{ I_2(x)}{\mmp\{B>x\}} = \int_{[0,1]\times (-\infty,
(1-\delta)x]}  \frac{\mmp\{uY > x-v\}}{\mmp\{B>x\}} \mmp\{A\in
{\rm d}u,B\in{\rm d}v\}~\to~ c_Y\me e^{bB}\1_{\{A=1\}}
\end{equation*}
as $x\to\infty$ by the dominated convergence theorem.

Combining pieces together finishes the proof of the lemma.
\end{proof}

Apart from Lemma \ref{lem:1iter} we shall use a technique of
stochastic bounds which is a quite commonly used method nowadays.
In the area of perpetuities this approach, as far as we know,
originates from \cite{Grey:1994}. For random variables $U$ and $V$
we shall write $U\leq_{{\rm st}} V$ to indicate that $V$
stochastically dominates $U$, that is, $\mmp\{U>x\}\leq
\mmp\{V>x\}$ for all $x\in\mr$.
\begin{lem}\label{lem:stochbound} Suppose \eqref{cond4} with $c< -1$,
\eqref{eq:momB1}, \eqref{eq:tailB} and $\mmp\{A\in (0,1]\}=1$. On
a possibly enlarged probability space there exists a  nonnegative
random variable $Z$ independent of $(A,B)$ such that
\begin{equation}\label{rel}
\mmp\{Z>x\}~ \sim~ c_Z \mmp\{B>x\},\quad x\to\infty
\end{equation}
for a positive constant $c_Z$ and $AZ+B \leq_{{\rm st}} Z$.
\end{lem}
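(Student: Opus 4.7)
The natural candidate is a shifted perpetuity built on an independent copy of the driving sequence. On an enlargement of the probability space, take an i.i.d.\ copy $(A_k',B_k')_{k\ge 1}$ of $(A_k,B_k)_{k\ge 1}$, independent of $(A,B)$. Thanks to the strict inequality $q:=\me e^{bB}\1_{\{A=1\}}<1$ from \eqref{eq:momB1}, fix $C>0$ small enough that $q_C:=e^{bC}q<1$. Setting $\Pi_k':=A_1'\cdots A_k'$, define
\begin{equation*}
Z:=\sum_{k\ge 1}\Pi_{k-1}'\bigl(B_k'+C\bigr)^+.
\end{equation*}
Since \eqref{eq:momB1} combined with $\mmp\{A\in(0,1]\}=1$ forces $\mmp\{A=1\}<1$, we have $\me\log A\in[-\infty,0)$, so $\Pi_k'\to 0$ exponentially a.s.; combined with the light right tail in \eqref{cond4}, the series converges absolutely a.s. Thus $Z\ge 0$ is a.s.\ finite and independent of $(A,B)$ by construction.

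The stochastic bound is built into the fixed-point structure. Splitting off the first summand yields $Z\overset{d}{=}A_1'Z^{(1)}+(B_1'+C)^+$ with $Z^{(1)}\overset{d}{=}Z$ independent of $(A_1',B_1')\overset{d}{=}(A,B)$. Together with $Z\perp(A,B)$, this gives $AZ+(B+C)^+\overset{d}{=}Z$. Noting the pathwise inequality $B\le(B+C)^+$, which is valid for $C>0$, we conclude
\begin{equation*}
\mmp\{AZ+B>x\}\le\mmp\{AZ+(B+C)^+>x\}=\mmp\{Z>x\}\qquad\text{for every }x\in\mr.
\end{equation*}

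The main obstacle is the tail equivalence $\mmp\{Z>x\}\sim c_Z\mmp\{B>x\}$. Since $B^+\in\mathcal{S}(b)$ by \eqref{cond4} (per the Remark following Theorem \ref{thm:p<1}) and this class is preserved under positive translations, the jumps $(B+C)^+\in\mathcal{S}(b)$ with $\mmp\{(B+C)^+>x\}\sim ae^{bC}\mmp\{B>x\}$. A direct appeal to Theorem \ref{thm:p<1} on $Z$ would be circular, so my plan is to combine the fixed-point identity with Lemma \ref{lem:1iter} and a squeeze argument: write $Z=\sum_{k=1}^n\Pi_{k-1}'(B_k'+C)^++\Pi_n'Z^{(n)}$, use the exponential decay of $\Pi_n'$ to render the remainder negligible on the scale $\mmp\{B>x\}$, and iterate Lemma \ref{lem:1iter} on the finite partial sum. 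Both $\limsup$ and $\liminf$ of $\mmp\{Z>x\}/\mmp\{B>x\}$ should satisfy the same affine equation, whose unique solution pins down $c_Z\in(0,\infty)$. The delicate point is establishing an a priori bound $\limsup_{x\to\infty}\mmp\{Z>x\}/\mmp\{B>x\}<\infty$, which I would obtain via a coarser stochastic majorant produced by grouping along the stopping time $\tau:=\inf\{k\ge 1:A_k'<1\}$ (as in the proof of Theorem \ref{main_exp}); after grouping, the new multiplicative factor satisfies $\mmp\{\cdot=1\}=0$, the new jumps remain nonnegative and $\mathcal{S}(b)$-tailed, and the subexponential-perpetuity estimates in the spirit of Proposition 4.2 of [Denisov+Zwart:2007] apply to yield the needed finiteness.
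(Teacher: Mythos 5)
Your construction of $Z$ as an actual perpetuity makes the stochastic dominance $AZ+B\le_{\rm st}Z$ essentially free (via the fixed-point identity plus $B\le (B+C)^+$), but it transfers all the difficulty into the tail equivalence \eqref{rel}, which is where your argument stops being a proof and becomes a plan with a real gap. Establishing $\mmp\{Z>x\}\sim c_Z\mmp\{B>x\}$ for your $Z$ is essentially the same task as proving Theorem \ref{thm:p<1} for the modified driving pair $(A,(B+C)^+)$. In the paper, Theorem \ref{thm:p<1} itself is proved by iterating Lemma \ref{lem:1iter} downward \emph{starting from} the majorant $Z$ produced by Lemma \ref{lem:stochbound} (that is the whole point of the lemma: to seed the upper-bound iteration with something whose tail is already known). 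So you cannot obtain the seed by the same iteration. Your proposed fix — prove an a priori bound $\limsup\mmp\{Z>x\}/\mmp\{B>x\}<\infty$ by grouping along $\tau=\inf\{k:A_k'<1\}$ and then invoking Proposition 4.2 of Denisov and Zwart — is not carried out, and as stated it does not directly apply: that proposition requires independence of $A$ and $B$, which the present lemma does not assume, and after grouping one must also verify that the new jump $\widehat B$ still has an $\mathcal{S}(b)$ right tail, which is a nontrivial claim. As written the tail side of the lemma is not established.

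The paper's construction avoids this entirely by choosing $Z$ that is \emph{not} a perpetuity. Pick $q$ large so that $\me e^{bB}\1_{\{A=1\}}+\me e^{bB}\1_{\{B>q\}}<1$, then $d$ large enough that $e^{bd}$ dominates $\mmp\{B\le q\}/(1-\me e^{bB}\1_{\{A=1\}}-\me e^{bB}\1_{\{B>q\}})$, and set $Y:=(B'+d)\1_{\{B'>q\}}$ for an independent copy $B'$ of $B$. Then $\mmp\{Y>x\}=\mmp\{B'>x-d\}\sim e^{bd}\mmp\{B>x\}$ holds \emph{immediately} from \eqref{cond4}; no iteration is needed. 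A single application of Lemma \ref{lem:1iter}, combined with the bound $f(y)\le e^{by}$ and the above choice of $q,d$, gives $\mmp\{AY+B>x\}\le\mmp\{Y>x\}$ for all $x\ge x_0$; conditioning $Y$ on $\{Y\ge x_0\}$ then produces $Z$ with the stochastic bound holding for all $x$ and \eqref{rel} inherited from $Y$. The moral is that the lemma only needs a one-step dominating random variable whose tail is known in closed form, not the stationary solution of the recursion — that reversal of which side of the lemma is ``cheap'' is exactly what your approach inverts, and it is why your route stalls.
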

\begin{proof}
Pick large enough $q>0$ satisfying
\begin{equation*}\label{x2}
\me e^{bB}\1_{\{A=1\}}+\me e^{bB}\1_{\{ B>q \}}<1
\end{equation*}
and then large enough $d>0$ satisfying
\begin{equation*}\label{x1}
e^{bd}>\frac{\mmp\{B\leq q\}}{1-\me e^{bB}\1_{\{A=1\}}-\me
e^{bB}\1_{\{B>q\}}}.
\end{equation*}
Let $B^\prime$ be a copy of $B$ independent of $(A,B)$. Setting
$Y:= (B^\prime + d)\1_{\{B^\prime> q \}}$ we infer
\begin{equation*}
\mmp\{Y> x\}~\sim~ e^{bd} \mmp\{B>x\},\quad x\to\infty.
\end{equation*}

Using Lemma~\ref{lem:1iter} with $c_Y=e^{bd}$ yields
\begin{equation}\label{x3}
\mmp  \{AY+B>x\}~\sim~ \big( e^{bd} \me e^{bB}\1_{ \{A=1\}}+\me
f(Y)\big)\mmp\{B>x\},\quad x\to\infty.
\end{equation}
Since for each $y\geq 0$
$$\frac{\mmp\{B>x-Ay\}}{\mmp\{B>x\}}\leq\frac{\mmp\{B>x-y\}}{\mmp\{B>x\}}~\to~
e^{by},\quad x\to\infty$$ in view of \eqref{cond4} we conclude
that
\begin{equation}\label{bound}
f(y)\leq e^{by},\quad y\geq 0.
\end{equation}
This implies that $$\me f(Y) \leq \me e^{bY} = e^{bd}\me e^{bB}
\1_{\{B>q\}}+\mmp\{B\leq q\},$$ whence
\begin{equation}\label{x4}
e^{bd} \me e^{bB}\1_{\{A=1 \}}+\me f(Y) \leq e^{bd}\me
e^{bB}\1_{\{A=1\}} +e^{bd}\me e^{bB} \1_{\{B>q\}}+\mmp\{B\leq q\}
<e^{bd}
\end{equation}
by the choice of $d$ and $q$. Now \eqref{x3} and \eqref{x4}
together imply that there exists $x_0>0$ such that $\mmp
\{AY+B>x\} \leq \mmp\{Y>x\}$ whenever $x\geq x_0$.

Let $Z$ be a random variable independent of $(A,B)$ with the
distribution $$\mmp\{Z>x\} = \mmp \{Y>x \: | \: Y\geq x_0\}.$$ For
$x\geq x_0$ we have
\begin{equation*}
\mmp\{AZ+B > x\}= \mmp\{AY+B>x \: | \: Y\geq x_0\}\leq \frac{\mmp
\{AY+B>x\}}{\mmp\{Y\geq x_0\}}
        \leq \frac{\mmp \{Y>x\}}{\mmp\{Y\geq x_0\}} = \mmp\{Z>x\}.
    \end{equation*}
For $x <x_0$ $\mmp\{Z>x\}=1$, so that $\mmp\{AZ+B >x\}\leq \mmp
\{Z>x\}$ holds for all $x\in\mr$. The proof of Lemma
\ref{lem:stochbound} is complete.
\end{proof}

\begin{proof}[Proof of Theorem \ref{thm:p<1}]
Let $X_0$ be a nonnegative random variable independent of $(A_n,
B_n)_{n\in\mn}$. The sequence $(X_n)_{n\in\mn_0}$, recursively
defined by the random difference equation
\begin{equation}\label{chain}
X_n= A_n X_{n-1}+B_n,\quad n\in\mn,
\end{equation}
forms a Markov chain. Occasionally, we write $X_n(X_0)$ for $X_n$
to bring out the dependence on $X_0$.

While condition \eqref{cond4} entails $\me \log(1+B^+)<\infty$
which in combination with \eqref{cond2} ensures that $\me
\log(1+|B|)<\infty$ (see the paragraph following formula
\eqref{chf2}), condition $\mmp\{A\in (0,1]\}=1$ together with
\eqref{eq:momB1} guarantees that $\me \log A\in [-\infty, 0)$.
Further, condition \eqref{degen2} obviously holds. Invoking now
Theorem 3.1 (c) in \cite{Goldie+Maller:2000} we conclude that
$X_n$ converges in distribution to the a.s.\ finite $X=\sum_{k\geq
1}\Pi_{k-1}B_k$ as $n\to\infty$ whatever the distribution of
$X_0$. Our plan is to approach the distribution of $X$ from above
and from below by the distributions of $X_n(X_0^{(i)})$,
$n\in\mn_0$, $i=1,2$. By picking appropriate distributions of
$X_0^{(i)}$ we shall be able to provide tight bounds on the
distribution tail of $X$.

\noindent {\sc Upper bound}. Put $X_0=Z$ for a random variable $Z$
as defined in Lemma~\ref{lem:stochbound} which is also independent
of $(A_n, B_n)_{n\in\mn}$. Then
\begin{equation*}
X_1=A_1X_0 +B_1\leq_{{\rm st}}X_0
\end{equation*}
and thereupon
\begin{equation*}
X_{n+1}=A_{n+1}X_n+B_{n+1}\leq_{{\rm st}} A_nX_{n-1}+B_n=X_n,\quad
n\in\mn
\end{equation*}
because $A_k>0$ a.s.\ for $k\in\mn$.

Define a sequence $(c_{X_n})_{n\in\mn_0}$ recursively by
$$c_{X_0}=c_Z,\quad c_{X_{n+1}}=c_{X_n}\me e^{bB}\1_{\{A=1\}}+\me f(X_n),\quad
n\in\mn_0.$$ Note that $$\me f(X)\leq \me f(X_n)\leq \me f(Z)\leq
\me e^{bZ}<\infty,$$ where the first two inequalities hold true
because $f$ is nondecreasing and $(X_n)_{n\in\mn_0}$ is a
stochastically nonincreasing sequence, the third inequality is a
consequence of \eqref{bound}, and the fourth inequality follows
from \eqref{rel} and \eqref{cond4}. Starting with
$$\mmp\{X_0>x\}~\sim~ c_{X_0}\mmp\{B>x\},\quad x\to\infty$$ we use the mathematical induction to
obtain
$$\mmp\{X_n>x\}=\mmp\{A_nX_{n-1}+B_n>x\}~\sim~c_{X_n}\mmp\{B>x\},\quad x\to\infty$$ with
the help of Lemma \ref{lem:1iter}. The latter limit relation
together with the stochastic monotonicity implies that
$(c_{X_n})_{n\in\mn_0}$ is a nonincreasing sequence of positive
numbers which must have a limit $c_X$, say, given by
\begin{equation*}
c_X = \frac{\me f(X)}{1-\me e^{bB}\1_{\{A=1 \}}}.
\end{equation*}
The form of the limit is justified by the fact that the
distributional convergence of $X_n$ to $X$ together with
continuity of the distribution of $X$ (see Theorem 2.1.2 in
\cite{Iksanov:2017} or Theorem 1.3 in
\cite{Alsmeyer+Iksanov+Roesler:2009}) ensures that $f(X_n)$
converges in distribution to $f(X)$ as $n\to\infty$ whence $\lin
\me f(X_n)=\me f(X)$ by the L\'{e}vy monotone convergence theorem.

Since $X\leq_{{\rm st}} X_n$ for each $n\in\mn_0$, we infer
\begin{equation*}
\limsup_{x\to\infty}\frac{\mmp \{X>x\}}{\mmp\{B>x\}}\leq
\limsup_{x\to\infty}\frac{\mmp \{X_n>x\}}{\mmp\{B>x\}}=c_{X_n}
\end{equation*}
for each $n\in\mn_0$ and thereupon
\begin{equation}\label{upper}
\limsup_{x\to\infty}\frac{\mmp \{X>x\}}{\mmp\{B>x\}}\leq c_X.
\end{equation}

\noindent {\sc Lower bound}. We start by noting that
\begin{equation*}
\mmp\{X>x\}=\mmp\{AX+B>x\}\geq\mmp\{AX+B>x, X>0\} \geq
\mmp\{X>0\}\mmp\{B>x\},\quad x\in\mr.
\end{equation*}
Therefore, denoting by $X_0$ a random variable which is
independent of $(A_n,B_n)_{n\in\mn_0}$ and has distribution
$\mmp\{X_0>x\}=\mmp\{X>0\}\mmp\{B>x\}$ for $x\geq 0$ and
$\mmp\{X_0>x\}=\mmp\{X>x\}$ for $x<0$, and arguing in the same way
as in the previous part of the proof we obtain a sequence
$(X_n)_{n\in\mn_0}$ approaching $X$ in distribution such that
$X_n\leq_{{\rm st}}X$ for $n\in\mn_0$. It is worth stating
explicitly that $(X_n)_{n\in\mn_0}$ is not necessarily
stochastically monotone.

Define a sequence $(c^\prime_{X_n})_{n\in\mn_0}$ recursively by
$$c^\prime_{X_0}=\mmp\{X>0\},\quad c^\prime_{X_{n+1}}=c^\prime_{X_n}\me e^{bB}\1_{\{A=1\}}+\me f(X_n),\quad
n\in\mn_0.$$ We claim that
\begin{equation}\label{inter1}
\lin \me f(X_n)=\me f(X)<\infty,
\end{equation}
where the finiteness follows from the previous part of the proof.
Mimicking the argument given in the treatment of the upper bound
we conclude that $f(X_n)$ converges in distribution to $f(X)$ as
$n\to\infty$. Therefore, $\liminf_{n\to\infty}\me f(X_n)\geq\me
f(X)$ by Fatou's lemma. On the other hand, we have $\me f(X_n)\leq
\me f(X)$ for $n\in\mn_0$, and \eqref{inter1} follows.

Now \eqref{inter1} together with $$c^\prime_{X_n}=(\mmp\{X>0\}\me
e^{bB}\1_{\{A=1\}})^n+\sum_{k=0}^{n-1} (\me
e^{bB}\1_{\{A=1\}})^{n-k-1} \me f(X_k)$$ for $n\in\mn$ ensures
that $c^\prime_X:=\lin c^\prime_{X_n}$ exists and
$$c^\prime_X=\frac{\me f(X)}{1-\me e^{bB}\1_{\{A=1\}}}=c_X.$$ The
same argument as in the previous part of the proof enables us to
conclude that
\begin{equation*}
\liminf_{x\to\infty}\frac{\mmp \{X>x\}}{\mmp \{B>x\}}\geq
c^\prime_{X_n}
\end{equation*}
for each $n\in\mn_0$, whence
\begin{equation}\label{lower}
\liminf_{x\to\infty}\frac{\mmp \{X>x\}}{\mmp \{B>x\}}\geq c_X.
\end{equation}
A combination of \eqref{upper} and \eqref{lower} yields
\eqref{res}. The proof of Theorem \ref{thm:p<1} is complete.
\end{proof}

As was announced in Remark \ref{refe} we are now discussing
similarities between the preceding proof and the proof of Theorem
3 in \cite{Palmowski+Zwart:2007}. First, our Lemma \ref{lem:1iter}
resembles Lemma 2 in \cite{Palmowski+Zwart:2007}. Secondly, the
random variables $Z$ and $Y_1^{\downarrow}$ appearing in our Lemma
\ref{lem:stochbound} and the proof of Theorem 3 in
\cite{Palmowski+Zwart:2007}, respectively, serve analogous
purposes.

\section{Proof of Theorem \ref{main2}}\label{13}

Recall that $\Psi(t)=\me e^{\iii t X}$, $t\in\mr$ satisfies
\eqref{chf}. Using
\begin{align*}
\int_0^\infty e^{\iii t y}\mmp\{B>y\}{\rm d}y& -\int_{-\infty}^{0}e^{\iii t y}\mmp\{B\leq y\}{\rm d}y\\
&=\me \left(\left[\int_0^B e^{\iii t y}{\rm d}y\right]\1_{\{B>0\}}\right)-\me\left( \left[\int_B^0 e^{\iii t y}{\rm d}y\right]\1_{\{B\leq 0\}}\right)\\
&=\me \left(\frac{e^{\iii t B}-1}{\iii t}\1_{\{B>0\}}\right)-\me
\left(\frac{1-e^{\iii t B}}{\iii t}\1_{\{B\leq
0\}}\right)=\frac{\Phi(t)-1}{\iii t},
\end{align*}
we obtain an equivalent form of \eqref{chf}
\begin{align*}
\Psi(t)&=\Phi(t)\exp\left(\iii \lambda\int_0^t \left[\int_0^{\infty}e^{\iii u y}\mmp\{B>y\}{\rm d}y -\int_{-\infty}^{0}e^{\iii u y}\mmp\{B\leq y\}{\rm d}y\right]{\rm d}u\right)\\
&=\Phi(t)\exp\left(\lambda \left[\int_0^{\infty}\frac{e^{\iii
ty}-1}{y}\mmp\{B>y\}{\rm d}y-\int_{-\infty}^0\frac{e^{\iii t
y}-1}{y}\mmp\{B\leq y\}{\rm d}y\right]\right)
\end{align*}
for $t\in\mr$. In view of \eqref{dec1} this can be further
represented as
\begin{eqnarray}
&&\Psi(t)\exp\left(\lambda \int_0^\infty\frac{e^{\iii ty}-1}{y}r^-(y){\rm d}y\right)\label{eq1}\\
&=&\Phi(t)\left(\frac{b}{b-\iii
t}\right)^{C\lambda}\exp\left(\lambda \int_0^\infty \frac{e^{\iii
ty}-1}{y}r^{+}(y){\rm d}y\right)\notag\\&\times&\exp\left(\lambda
\int_0^\infty\frac{e^{-\iii ty}-1}{y}\mmp\{-B\geq y\}{\rm
d}y\right)\notag
\end{eqnarray}
for $t\in\mr$. Let $Z_1$, $Z_2$ and $Z_3$ be infinitely divisible
nonnegative random variables with zero drifts and the L\'{e}vy
measures $\nu_1({\rm d}y)=y^{-1}r^-(y)\1_{(0,\infty)}(y){\rm d}y$,
$\nu_2({\rm d}y)=y^{-1}r^+(y)\1_{(0,\infty)}(y){\rm d}y$ and
$\nu_3({\rm d}y)=y^{-1}\mmp\{-B\geq y\}\1_{(0,\infty)}(y){\rm
d}y$, respectively. Let $V$ be a random variable with a
$\gamma(C\lambda, b)$ distribution. Assume that $Z_1$ is
independent of $X$ and that $B$, $V$, $Z_2$ and $Z_3$ are mutually
independent. Equality \eqref{eq1} tells us that $X+Z_1$ has the
same distribution as $B+V+Z_2-Z_3$. We claim that
\begin{equation}\label{asym}
\mmp\{X+Z_1>x\}=\mmp\{B+V+Z_2-Z_3>x\}~\sim~ C\me
e^{b(Z_2-Z_3)}\frac{(bx)^{C\lambda}}{\Gamma(C\lambda+1)}e^{-bx}
\end{equation}
as $x\to\infty$, where $\me e^{b(Z_1-Z_2)}\leq \me
e^{bZ_1}<\infty$ by virtue of the first condition in
\eqref{r_1_integreable}.

\noindent {\sc Proof of \eqref{asym}}. By \eqref{dec1} and
\eqref{cond1}, $\mmp\{B>x\}\sim Ce^{-bx}$ as $x\to\infty$. Hence,
$$\mmp\{B+V>x\}\sim
\frac{C(bx)^{C\lambda}}{\Gamma(C\lambda+1)}e^{-bx},\quad
x\to\infty$$ by Lemma 7.1 (iii) in \cite{Rootzen:1986} (in the
notation of \cite{Rootzen:1986} we set $Y_1:=bB$ and $Y_2:=bZ$).
According to an extension of Breiman's lemma (Proposition 2.1 in
\cite{Denisov+Zwart:2007}) relation \eqref{asym} follows provided
that the following conditions hold: (a) $\me
e^{b(Z_1-Z_2)}<\infty$; (b) $x^b\mmp\{e^{Z_1-Z_2}>x\}=o(h(x))$ as
$x\to\infty$, where $h(x):=x^b\mmp\{e^{B+V}>x\}$ for $x\geq 0$;
(c) ${\lim\sup}_{x\to\infty}\frac{\sup_{1\leq y\leq
x}\,h(y)}{h(x)}<\infty$. We already know that (a) holds which
particularly implies that
$\lim_{x\to\infty}\,x^b\mmp\{e^{Z_1-Z_2}>x\}=0$. While this in
combination with $h(x)\sim (Cb^{C\lambda}/\Gamma(C\lambda+1))(\log
x)^{C\lambda}$ proves (b), the last asymptotic relation alone
secures (c). The proof of \eqref{asym} is complete.

With \eqref{asym} at hand we infer
$$
\mmp\{X>x\} \sim \frac{\me e^{b(Y_2-Y_3)}}{\me
e^{bY_1}}\frac{C(bx)^{\lambda C}}{\Gamma(\lambda
C+1)}e^{-bx}=Kx^{\lambda C}e^{-bx},\quad x\to\infty
$$
by Corollary 4.3 (ii) in \cite{Jacobsen+etal:2009} which is
applicable because the distribution of $Y_1$ is infinitely
divisible and $\me e^{(b+\varepsilon)Y_1}<\infty$ by the second
part of \eqref{r_1_integreable}. The proof of Theorem \ref{main2}
is complete.

\acks D. Buraczewski and P. Dyszewski were partially supported by
the National Science Center, Poland (Sonata Bis, grant number
DEC-2014/14/E/ST1/00588). The work of A. Marynych was supported by
the Alexander von Humboldt Foundation. The authors thank two
anonymous referees for pointing out several oversights in the
original version and many other useful comments which helped
improving the presentation of this work.


\begin{thebibliography}{99}
\footnotesize

\bibitem{Alsmeyer+Dyszewski:2016+} {\sc Alsmeyer, G. and Dyszewski, P.} (2017). Thin tails of fixed points of
the non\-ho\-mo\-ge\-ne\-ous smoothing transform. {\em Stoch.
Proc. Appl.} {\bf 127,} 3014--3041.

\bibitem{Alsmeyer+Iksanov+Roesler:2009} {\sc Alsmeyer, G., Iksanov, A. and  R\"{o}sler, U.} (2009). On distributional properties of
perpetuities. {\em J. Theoret. Probab.} {\bf 22,} 666--682.

\bibitem{Bondesson:1992} {\sc Bondesson, L.} (1992). {\em Generalized gamma convolutions and related classes of distributions and densities}.
Lecture Notes in Statistics, {\bf 76}. Springer-Verlag, New York.

\bibitem{Breiman:1965} {\sc Breiman, L.} (1965). On some limit theorems similar to the
arc-sin law. {\em Theory Probab. Appl.} {\bf 10,} 323--331.

\bibitem{Buraczewski+Damek+Mikosch:2016} {\sc Buraczewski, D., Damek, E. and Mikosch, T.} (2016). {\em Stochastic models with power-law
tails: the equation $X=AX+B$}. Springer, Cham.

\bibitem{Cline+Samorodnitsky:1994} {\sc Cline, D. and Samorodnitsky, G.} (1994). Subexponentiality of the
product of independent random variables. {\em Stoch. Proc. Appl.}
{\bf 49,} 75--98.

\bibitem{Damek+Dyszewski:2017} {\sc Damek, E. and Dyszewski, P.} (2017). Iterated random functions and regularly varying tails.
Preprint available at {\tt https://arxiv.org/abs/1706.03876}

\bibitem{Damek+Kolodziejek:2017} {\sc Damek, E. and Ko{\l}odziejek, B.} (2017). Stochastic recursions: between Kesten's and Grey's assumptions.
Preprint available at {\tt https://arxiv.org/abs/1701.02625}

\bibitem{Denisov+Zwart:2007} {\sc Denisov, D. and Zwart, B.} (2007). On a theorem of Breiman and a class of random
difference equations. {\em J. Appl. Probab.} {\bf 44,} 1031--1046.

\bibitem{Dyszewski:2016} {\sc Dyszewski, P.} (2016). Iterated random functions and slowly varying tails.
{\em Stoch. Proc. Appl.} {\bf 126,} 392--413.

\bibitem{Goldie:1991} {\sc Goldie, C.~M.} (1991). Implicit renewal theory and tails of solutions of random equations.
{\em Ann. Appl. Probab.} {\bf 1,} 126--166.

\bibitem{Goldie+Grubel:1996} {\sc Goldie, C.~M. and Gr\"{u}bel, R.} (1996). Perpetuities with thin tails. {\em Adv.
Appl. Probab.} {\bf 28,} 463--480.

\bibitem{Goldie+Maller:2000} {\sc Goldie, C.~M. and Maller, R.~A.} (2000). Stability of perpetuities. {\em Ann. Probab.} {\bf 28,} 1195--1218.

\bibitem{Gradshteyn+Ryzhik:2000} {\sc Gradshteyn, I.~S. and Ryzhik, I.~M.} (2000). {\em Table of integrals, series, and products}.
Academic Press, San Diego.

\bibitem{Grey:1994} {\sc Grey, D.} (1994). Regular variation in the tail behaviour of solutions of random difference equations. {\em
Ann. Appl. Probab.} {\bf 4,} 169--183.

\bibitem{Grincevicius:1975} {\sc Grincevi\v{c}ius, A.~K.} (1975). One limit distribution for a random walk on the line.
{\em Lithuanian Math. J.} {\bf 15,} 580--589.

\bibitem{Hitczenko:2010} {\sc Hitczenko, P.} (2010). On tails of
perpetuities. {\em J. Appl. Probab.} {\bf 47,} 1191--1194.

\bibitem{Hitczenko+Wesolowski:2009} {\sc Hitczenko, P. and  Weso{\l}owski, J.} (2009). Perpetuities with thin tails revisited.
{\em Ann. Appl. Probab.} {\bf 19,} 2080--2101. Erratum: {\em Ann.
Appl. Probab.} {\bf 20,} (2010), 1177.

\bibitem{Iksanov:2002} {\sc Iksanov, O.~M.} (2002). On positive distributions of the class $L$ of self-decomposable
laws. {\em Theor. Probab. Math. Statist.} {\bf 64,} 51--61.

\bibitem{Iksanov:2017} {\sc Iksanov, A.} (2016). {\em Renewal theory for perturbed random walks and similar
processes}. Birkh\"{a}user, Basel.

\bibitem{Iksanov+Jurek:2003} {\sc Iksanov, A.~M. and Jurek, Z.~J.} (2003).
Shot noise distributions and selfdecomposability. {\em Stoch.
Analysis Appl.} {\bf 21,} 593--609.

\bibitem{Jacobsen+etal:2009} {\sc Jacobsen, M., Mikosch, T., Rosi\'{n}ski, J. and Samorodnitsky, G.}
(2009). Inverse problems for regular variation of linear filters,
a cancellation property for $\sigma$-finite measures and
identification of stable laws. {\em Ann. Appl. Probab.} {\bf 19,}
210--242.

\bibitem{Jurek+Vervaat:1983} {\sc Jurek, Z.~J. and Vervaat, W.} (1983). An integral representation for selfdecomposable Banach space valued random
variables. {\em Z. Wahrscheinlichkeitstheorie Verw. Geb.} {\bf
62,} 247--262.

\bibitem{Kesten:1973} {\sc Kesten, H.} (1973). Random difference equations and renewal theory for products of random
matrices. {\em Acta Math.} {\bf 131,} 207--248.

\bibitem{Kevei:2016} {\sc Kevei, P.} (2016). A note on the Kesten-Grincevi{\v{c}}ius-Goldie theorem.
{\em Electron. Commun. Probab.} {\bf 21,} no. 51, 1--12.

\bibitem{Kevei:2017} {\sc Kevei, P.} (2017). Implicit renewal theory in the arithmetic case. {\em J. Appl. Probab.} {\bf 54,} 732--749.

\bibitem{Kolodziejek:2016+} {\sc Ko{\l}odziejek, B.} (2017). Logarithmic tails of sums of products of
positive random variables bounded by one. {\em Ann. Appl. Probab.}
{\bf 27,} 1171--1189.

\bibitem{Konstantinides+Ng+Tang:2010}  {\sc Konstantinides, D.~G., Ng, K.~W and Tang, Q.} (2010). The probabilities of absolute ruin
in the renewal risk model with constant force of interest. {\em J.
Appl. Probab.} {\bf 47,} 323--334.

\bibitem{Maulik+Zwart:2006} {\sc Maulik, K. and  Zwart, B.} (2006). Tail asymptotics for exponential functionals of L\'{e}vy
processes. {\em Stoch. Proc. Appl.} {\bf 116,} 156--177.

\bibitem{Palmowski+Zwart:2007} {\sc Palmowski, Z. and Zwart, B.} (2007). Tail asymptotics of the supremum of a regenerative process.
{\em J. Appl. Probab.} {\bf 44,} 349--365.

\bibitem{Rootzen:1986} {\sc Rootz\'{e}n, H.} (1986). Extreme value theory for moving average processes. {\em Ann. Probab.} {\bf 14,} 612--652.

\bibitem{Takacz:1955} {\sc Tak\'{a}cs, L.} (1955). Stochastic processes connected with certain physical recording apparatuses.
{\em Acta Math. Acad. Sci. Hungar.} {\bf 6,} 363--380.

\bibitem{Vervaat:1979} {\sc Vervaat, W.} (1979). On a stochastic difference equation and
a representation of nonnegative infinitely divisible random
variables. {\em Adv. Appl. Probab.} {\bf 11,} 750--783.


\end{thebibliography}
\end{document}